\theoremstyle{plain}
\newtheorem{theorem}{Theorem}[section]
\newtheorem{example}[theorem]{Example}
\newtheorem{corollary}[theorem]{Corollary}
\newcommand{\N}{\mathbb{N}}
\theoremstyle{remark}
\newcommand{\Z}{\mathbb{Z}}
\begin{document}

\title{$\tau$-Complete Factorization in Commutative Rings with Zero-Divisors}
        \date{\today}

\author{Christopher Park Mooney}
\address{Reinhart Center \\ Viterbo University \\ 900 Viterbo Drive \\ La Crosse, WI 54601}
\email{cpmooney@viterbo.edu}

\keywords{factorization, zero-divisors, commutative rings, complete factorization}

\begin{abstract}
Much work has been done on generalized factorization techniques in integral domains, namely $\tau$-factorization.  There has also been substantial progress made in investigating factorization in commutative rings with zero-divisors.  There are many ways authors have decided to study factorization when zero-divisors present.  This paper focuses on the method $\tau$-complete factorizations developed by D.D. Anderson and A. Frazier.  There is a natural way to extend $\tau$-complete factorization to commutative rings with zero-divisors.  $\tau$-complete factorization is a natural way to think of refining factorizations into smaller pieces until one simply cannot refine any further.  We see that this notion translates well into the case of commutative rings with zero-divisors and there is an interesting relationship between the $\tau$-complete finite factorization properties and the original $\tau$-finite factorization properties in rings with zero-divisors developed by the author in 2012.
\\
\vspace{.1in}\noindent \textbf{2010 AMS Subject Classification:} 13A05, 13E99, 13F15
\end{abstract}
\maketitle
\section{Introduction}  
\indent There has been a substantial amount of research done on the factorization properties of commutative rings, especially domains.  Unique factorization domains (UFDs) are well understood and have been studied extensively over the years.  More recently, many authors have studied rings which satisfy various weakenings of the UFD conditions.  These factorization properties of domains have been extended in several ways to rings with zero-divisors.  Traditionally, in the domain case, authors have studied prime or irreducible factorizations. More recently, research has been done on generalizing the types of factorizations that have been studied to include things like co-maximal factorizations or using $\star$-operations to generalize factorization.  
\\
\indent Of particular interest to the current article is the 2011 work of D.D. Anderson and A. Frazier.  This is a survey article, \cite{ Frazier}, on the study of factorization in domains in which they introduce $\tau$-factorization.  The use of $\tau$-factorization yields a beautiful synthesis of many of these generalizations of factorizations studied in the integral domain case.  The goal then has been to extend this powerful approach of $\tau$-factorization to the case of a commutative ring with zero-divisors.  There have been several unique ways of studying factorization in commutative rings with zero-divisors, so this has led to many approaches to extending $\tau$-factorization.  
\\
\indent In \cite{Mooney}, the author used the methods established by D.D. Anderson and S. Valdes-Leon in \cite{Valdezleon} to extend many of the $\tau$-factorization definitions to work also in rings with zero-divisors.  In \cite{Mooney2}, the author investigated extending $\tau$-factorization using the notion of U-factorizations developed first by C.R. Fletcher in \cite{Fletcher, Fletcher2} and then studied extensively by M. Axtell, N. Baeth, and J. Stickles in \cite{Axtell, Axtell2}.   
\\
\indent In this paper, we investigate another particularly effective way to extend $\tau$-factorization to rings with zero-divisors.  This is the method of $\tau$-complete factorization.  This method was originally defined in the integral domain case in \cite{Frazier}.  In Section Two, we provide some necessary background definitions and theorems from the theory of $\tau$-factorization in domains as well as the theory of factorization in rings with zero-divisors.  In Section Three, we define what we refer to as $\tau$-complete factorizations in rings with zero-divisors.  These are $\tau$-factorizations in which the factorizations cannot be refined to create any strictly longer $\tau$-factorization.  We proceed to define several $\tau$-complete finite factorization properties rings may possess. In Section Four, we investigate the relationship between these new $\tau$-complete factorizations and the the previous $\tau$-irreducible factorizations studied in \cite{Mooney}.

\section{Preliminary Definitions and Results}
\indent For the purposes of this paper, we will assume $R$ is a commutative ring $1$.   Let $R^*=R-\{0\}$, let $U(R)$ be the set of units of $R$, and let $R^{\#}=R^*-U(R)$ be the non-zero, non-units of $R$.  As in \cite{Valdezleon}, we let $a \sim b$ if $(a)=(b)$, $a\approx b$ if there exists $\lambda \in U(R)$ such that $a=\lambda b$, and $a\cong b$ if (1) $a\sim b$ and (2) $a=b=0$ or if $a=rb$ for some $r\in R$ then $r\in U(R)$.  We say $a$ and $b$ are \emph{associates} (resp. \emph{strong associates, very strong associates}) if $a\sim b$ (resp. $a\approx b$, $a \cong b$).  As in \cite{Stickles}, a ring $R$ is said to be \emph{strongly associate} (resp. \emph{very strongly associate}) ring if for any $a,b \in R$, $a\sim b$ implies $a \approx b$ (resp. $a \cong b$).
\\
\indent Let $\tau$ be a relation on $R^{\#}$, that is, $\tau \subseteq R^{\#} \times R^{\#}$.  We will always assume further that $\tau$ is symmetric.  For non-units $a, a_i \in R$, and $\lambda \in U(R)$, $a=\lambda a_1 \cdots a_n$ is said to be a \emph{$\tau$-factorization} if $a_i \tau a_i$ for all $i\neq j$.  If $n=1$, then this is said to be a trivial \emph{$\tau$-factorization}.  
\\
\indent As in \cite{Mooney}, we say $\tau$ is \emph{multiplicative} (resp. \emph{divisive}) if for $a,b,c \in R^{\#}$ (resp. $a,b,b' \in R^{\#}$), $a\tau b$ and $a\tau c$ imply $a\tau bc$ (resp. $a\tau b$ and $b'\mid b$ imply $a \tau b'$).  We say $\tau$ is \emph{associate} (resp. \emph{strongly associate}, \emph{very strongly associate) preserving} if for $a,b,b'\in R^{\#}$ with $b\sim b'$ (resp. $b\approx b'$, $b\cong b'$) $a\tau b$ implies $a\tau b'$.  A \emph{$\tau$-refinement} of a $\tau$-factorization $\lambda a_1 \cdots a_n$ is a $\tau$-factorization of the form
$$(\lambda \lambda_1 \cdots \lambda_n)b_{11}\cdots b_{1m_1}\cdot b_{21}\cdots b_{2m_2} \cdots b_{n1} \cdots b_{nm_n}$$
where $a_i=\lambda_ib_{i_1}\cdots b_{i_{m_i}}$ is a $\tau$-factorization for each $i$.  We say that $\tau$ is \emph{refinable} if every $\tau$-refinement of a $\tau$-factorization is a $\tau$-factorization.  We say $\tau$ is \emph{combinable} if whenever $\lambda a_1 \cdots a_n$ is a $\tau$-factorization, then so is each $\lambda a_1 \cdots a_{i-1}(a_ia_{i+1})a_{i+2}\cdots a_n$.  It is easily checked that for a multiplicative $\tau$, it is combinable.  For a divisive $\tau$, it is refinable and associate preserving.
\\
\indent We now pause to supply the reader with a few examples of particularly useful or interesting $\tau$-relations to give an idea of the power of $\tau$-factorization.

\begin{example} Let $R$ be a commutative ring with $1$.
\begin{enumerate}

\item $\tau=R^{\#}\times R^{\#}$.  This yields the usual factorizations in $R$ and $\mid_{\tau}$ is the same as the usual divides.  $\tau$ is multiplicative and divisive and hence associate preserving, combinable and refinable.

\item $\tau=\emptyset$.  For every $a\in R^{\#}$, there is only the trivial factorization and $a\mid{_\tau} b \Leftrightarrow a=\lambda b$ for $\lambda \in U(R)$ $\Leftrightarrow a\approx b$.  Again $\tau$ is both multiplicative and divisive (vacuously).

\item Let $S$ be a nonempty subset of $R^{\#}$ and let $\tau=S\times S$, $a\tau b \Leftrightarrow a,b\in S$. So $\tau$ is multiplicative (resp. divisive) if and only if $S$ is multiplicatively closed (resp. closed under non-unit factors).  A non-trivial $\tau$-factorization is up to unit factors a factorization into elements from $S$.

\item Let $I$ be an ideal of $R$ and define $a \tau b$ if and only if $a-b\in I$.  This relation is certainly symmetric, but need not be multiplicative or divisive.  Let $R=\Z$ and $I=(5)$.  Consider $7\tau 2$ and $7 \tau 7$, but $7 \not \tau 14$, and $9 \tau 4$, but $2 \mid 4$ yet $9\not \tau 2$.

\item Let $a \tau b \Leftrightarrow (a,b)=R$, that is $a$ and $b$ are co-maximal.  These are the co-maximal factorizations studied by S. McAdam and R. Swan in \cite{Mcadam}.  This has been generalized in the following way.  Let $\star$ be a star-operation on $R$ and define $a\tau b \Leftrightarrow (a,b)^{\star}=R$, that is $a$ and $b$ are $\star$-coprime or $\star$-comaximal.  This particular operation has been studied more in depth by Jason Juett in \cite{Juettcomax}.

%\item Let $a\tau_n b \Leftrightarrow ab\neq 0$.  Notice in a domain, this is precisely $\tau = D^{\#} \times D^{\#}$, so this example is different from the usual factorization only for rings with zero-divisors.  $\tau$ is divisive, but not multiplicative.  Let $a\tau b$ and $a'\mid a$ and $b' \mid b$, say $a's=a$ and $b't=b$.  Then $a'sb't=ab\neq 0$, so certainly $a'b'\neq 0$ so $a'\tau b'$ as desired.  On the other hand, in $\Z/12\Z$ we have $2\tau 2$ and $2\tau 3$, but $2 \not \tau 6$. 

\item Let $a \tau_z b \Leftrightarrow ab=0$.  Then every $a\in R^{\#}$ is a $\tau$-atom.  The only nontrivial $\tau$-factorizations are $0=\lambda a_1 \cdot \ldots \cdot a_n$ where $a_i \cdot a_j=0$ for all $i \neq j$.  This example was studied extensively in \cite{Mooney} and is closely related to the zero-divisor graphs introduced by I. Beck in \cite{Beck}.  Zero-divisor graphs have since received a considerable amount of attention and have been studied and developed by many authors including, but not limited to D.D. Anderson, D.F. Anderson, M. Axtell, A. Frazier, J. Stickles, A. Lauve, P.S. Livingston, and M. Naseer in \cite{Axtellzdg, andersonzdg, davidanderson, Livingston}.  

%\item Let $a\tau b \Leftrightarrow a,b\in\text{Reg}(R)$.  Then this gives us the regular factorization studied in \cite{Valdezleon3}.  This is the inspiration for Section \ref{sec: regular}.

%\item Let $\tau \subseteq R^{\#}\times R^{\#}$, then we define $\tau_{reg}:=\tau \cap \left(Reg(R) \times Reg(R)\right)$.  Then this makes every zero-divisor into a $\tau_{reg}$-atom.  This is the type of factorization we would like to use to generalize the notion of $\tau$-factorizations to the regular factorizations studied in \cite{Valdezleon3}.  This will be studied more in depth in Section \ref{sec: regular}.
\end{enumerate}
\end{example}

\indent We now summarize several of the definitions of the following types of $\tau$-irreducible elements given in \cite{Mooney}, where other equivalent definitions can be found.  Let $a\in R$ be a non-unit.  Then $a$ is said to be \emph{$\tau$-irreducible} or \emph{$\tau$-atomic} if for any $\tau$-factorization $a=\lambda a_1 \cdots a_n$, $a\sim a_i$ for some $i$.  We will say $a$ is \emph{$\tau$-strongly irreducible} or \emph{$\tau$-strongly atomic} if for any $\tau$-factorization $a=\lambda a_1 \cdots a_n$, $a \approx a_i$ for some $a_i$.  We will say that $a$ is \emph{$\tau$-m-irreducible} or \emph{$\tau$-m-atomic} if for any $\tau$-factorization $a=\lambda a_1 \cdots a_n$, we have $a \sim a_i$ for all $i$.  We will say that $a$ is \emph{$\tau$-very strongly irreducible} or \emph{$\tau$-very strongly atomic} if $a\cong a$ and $a$ has no non-trivial $\tau$-factorizations.  
\\
\indent We introduce another type of $\tau$-irreducible element which will convenient in certain situations regarding complete factorizations.  We will say that $a$ is \emph{$\tau$-unrefinably irreducible} or \emph{$\tau$-unrefinably atomic} if $a$ has no non-trivial $\tau$-factorizations. 
\begin{theorem} Let $R$ be a commutative ring with $1$ and let $\tau$ be a symmetric relation on $R^{\#}$.  Let $a\in R$, be $\tau$-unrefinably irreducible.  Then any strong associate of $a$ is also $\tau$-unrefinably irreducible.
\end{theorem}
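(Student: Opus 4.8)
The plan is to argue by contradiction, exploiting the fact that a strong associate differs from $a$ only by a unit and that in a $\tau$-factorization the units are collected into the single leading coefficient $\lambda$, so they play no role in the pairwise relations $a_i \tau a_j$. First I would fix notation: let $b$ be a strong associate of $a$, so $b \approx a$ and there is a unit $\mu \in U(R)$ with $b = \mu a$, equivalently $a = \mu^{-1} b$. Before anything else I would record that $b$ is a non-unit, since otherwise $a = \mu^{-1} b$ would be a product of units and hence a unit, contradicting the hypothesis that $a$ is a non-unit (which is implicit in $a$ being $\tau$-unrefinably irreducible). Thus it is meaningful to ask whether $b$ is $\tau$-unrefinably irreducible.

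Next I would suppose, toward a contradiction, that $b$ admits a non-trivial $\tau$-factorization $b = \nu b_1 \cdots b_m$ with $m \geq 2$, each $b_i$ a non-unit, and $b_i \tau b_j$ for all $i \neq j$. Substituting yields $a = \mu^{-1} b = (\mu^{-1}\nu)\, b_1 \cdots b_m$. Since $\mu^{-1}\nu \in U(R)$ is again a unit, and since the non-unit factors $b_1, \dots, b_m$ together with all the relations $b_i \tau b_j$ are left completely unchanged by this rewriting, the expression $a = (\mu^{-1}\nu)\, b_1 \cdots b_m$ is itself a non-trivial $\tau$-factorization of $a$. This contradicts the assumption that $a$ is $\tau$-unrefinably irreducible, so $b$ can have no non-trivial $\tau$-factorization, i.e. $b$ is $\tau$-unrefinably irreducible.

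I want to emphasize that there is essentially no obstacle here beyond bookkeeping: the whole content is the observation that the strong-associate unit $\mu$ is absorbed harmlessly into the leading unit of the factorization, so the combinatorial data defining a $\tau$-factorization (the list of non-unit factors and their symmetric pairwise $\tau$-relations) is transported verbatim from $b$ to $a$. The only step that asks for a moment's care is verifying that $b$ is a non-unit so that the definition applies; everything else is formal substitution. I would also note in passing that $\approx$ is symmetric, so the statement really is about the full equivalence class of $a$ under strong association, not just one direction.
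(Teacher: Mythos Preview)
Your proof is correct and follows essentially the same argument as the paper's: assume a strong associate has a non-trivial $\tau$-factorization and absorb the connecting unit into the leading coefficient to obtain a non-trivial $\tau$-factorization of $a$, contradicting unrefinable irreducibility. Your additional remark that $b$ must be a non-unit is a small nicety the paper leaves implicit, but otherwise the arguments are identical.
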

\begin{proof} Let $a\in R$ be $\tau$-unrefinably irreducible.  Let $a' \in R$ such that $a \approx a'$, say $a=\lambda a'$.  Suppose $a'$ were not $\tau$-unrefinably irreducible.  Then there is a non-trivial $\tau$-factorization of $a'$, say $a'= \mu b_1 \cdots b_n$ with $n \geq 2$.  But then $a=\lambda a'= (\lambda \mu) b_1\cdots b_n$ is a $\tau$-factorization with $n \geq 2$, contradicting the assumption that $a$ is $\tau$-unrefinably atomic.
\end{proof}
\begin{theorem}\label{thm: unrefinable} Let $R$ be a commutative ring with $1$ and $\tau$ be a symmetric relation on $R^{\#}$.  Let $a \in R$ be a non-unit.  The following diagram illustrates the relationship between the various types of $\tau$-irreducibles $a$ might satisfy where $\approx$ represents the implication requires a strongly associate ring:
$$\xymatrix{
\tau\text{-very strongly irred.} \ar@{=>}[r] & \tau\text{-unrefinably irred.}\ar@{=>}[dr] \ar@{=>}[r] & \tau\text{-strongly irred.} \ar@{=>}[r]& \tau \text{-irred.}\\
& & \tau\text{-m-irred.}\ar@{=>}[u]_{\approx}\ar@{=>}[ur]   &}$$
\end{theorem}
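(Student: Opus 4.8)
The plan is to verify each of the six implication arrows in the diagram separately, since they are essentially independent of one another. Throughout I would lean on two elementary facts about the associate relations introduced above: the implication $a \approx b \Rightarrow a \sim b$, which is immediate because $a = \lambda b$ with $\lambda \in U(R)$ forces $(a) = (b)$; and the defining property of a strongly associate ring, namely that $a \sim b \Rightarrow a \approx b$ there. I would also repeatedly use the observation that any trivial $\tau$-factorization $a = \lambda a_1$ satisfies $a \approx a_1$ directly from the definition of $\approx$.

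First I would dispose of the three arrows clustered on the left. The arrow $\tau$-very strongly irred.\ $\Rightarrow$ $\tau$-unrefinably irred.\ is purely definitional: $\tau$-very strong irreducibility is the conjunction of $a \cong a$ with the absence of non-trivial $\tau$-factorizations, and the latter clause is verbatim the definition of $\tau$-unrefinable irreducibility. For the two arrows leaving $\tau$-unrefinably irred., the crucial move is a quantifier reduction. If $a$ has no non-trivial $\tau$-factorization, then any $\tau$-factorization $a = \lambda a_1 \cdots a_n$ is forced to have $n = 1$, so $a = \lambda a_1$ and hence $a \approx a_1$. This lone factor simultaneously witnesses the ``$a \approx a_i$ for some $i$'' requirement of $\tau$-strong irreducibility and, via $\approx \Rightarrow \sim$, the ``$a \sim a_i$ for all $i$'' requirement of $\tau$-m-irreducibility.

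Next I would treat the three arrows among the weaker notions. The arrow $\tau$-strongly irred.\ $\Rightarrow$ $\tau$-irred.\ is immediate: replace the witness $a \approx a_i$ by the weaker $a \sim a_i$. The arrow $\tau$-m-irred.\ $\Rightarrow$ $\tau$-irred.\ follows because every $\tau$-factorization has at least one factor, so ``$a \sim a_i$ for all $i$'' specializes to ``$a \sim a_i$ for some $i$''. Finally, the arrow $\tau$-m-irred.\ $\Rightarrow$ $\tau$-strongly irred.\ is precisely where the strongly associate hypothesis enters: given a $\tau$-factorization, $\tau$-m-irreducibility yields $a \sim a_i$ for every $i$, and in a strongly associate ring this promotes to $a \approx a_i$, supplying the witness needed for $\tau$-strong irreducibility.

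I do not expect a genuine obstacle, since each arrow collapses either to the associate hierarchy or to unwinding a definition. The only points that reward care are bookkeeping ones: correctly handling the universal quantifier over all $\tau$-factorizations in the arrows out of $\tau$-unrefinable irreducibility by first noting that only trivial factorizations can occur, and pinning down that the strongly associate hypothesis is invoked in exactly one arrow, $\tau$-m-irred.\ $\Rightarrow$ $\tau$-strongly irred., consistent with the $\approx$ label decorating that edge of the diagram.
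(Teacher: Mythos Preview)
Your proposal is correct and matches the paper's approach essentially verbatim: the paper handles the three left-hand arrows exactly as you do (dropping the $a \cong a$ clause, then noting that only trivial $\tau$-factorizations $a=\lambda b$ occur so $a\approx b$ and $a\sim b$), and defers the remaining three arrows to \cite[Theorem 3.9]{Mooney}, for which you have supplied the standard direct arguments. The only difference is that your write-up is self-contained where the paper cites.
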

\begin{proof} If $a$ is $\tau$-very strongly irreducible, then it is immediate that $a$ is also $\tau$-unrefinably irreducible.  We have simply removed that $a \cong a$ condition.  If $a$ is $\tau$-unrefinably irreducible, then the only $\tau$-factorizations of $a$ are of the form $a=\lambda b$ for some $b\in R$, but this shows $a \approx b$ and therefore $a \sim b$ proving $a$ is both $\tau$-m-atomic and $\tau$-strongly atomic.  The rest of the implications come from \cite[Theorem 3.9]{Mooney}.
\end{proof}
  %\indent From \cite[Theorem 3.9]{Mooney} and \cite{Mooneyregular}, we have the following relations where $\dagger$ represents the implication requires a strongly associate ring:
%$$\xymatrix{
%\tau\text{-very strongly irred.} \ar@{=>}[r] & \tau\text{-unrefinably irred.}\ar@{=>}[dr] \ar@{=>}[r] & \tau\text{-strongly irred.} \ar@{=>}[r]& \tau \text{-irred.}\\
%& & \tau\text{-m-irred.}\ar@{=>}[u]_{\dagger}\ar@{=>}[ur]   &}$$
 %\\
\indent This leads to the following definitions, found in \cite{Mooney}, for $\tau$-finite factorization properties a ring might possess.  While in \cite{Mooney}, the author did not use $\tau$-unrefinably atomic elements, we go ahead and add in the analogous definitions using this type of irreducible element as well. Given a commutative ring $R$ and a symmetric relation $\tau$ on $R^{\#}$, we let $\alpha \in \{$atomic, strongly atomic, m-atomic, unrefinably atomic, very strongly atomic$ \}$, $\beta \in \{$associate, strong associate, very strong associate$\}$.  Then $R$ is said to be \emph{$\tau$-$\alpha$} if every non-unit $a\in R$ has a $\tau$-factorization $a=\lambda a_1\cdots a_n$ with $a_i$ being $\tau$-$\alpha$ for all $1\leq i \leq n$.  We will call such a factorization a \emph{$\tau$-$\alpha$-factorization}.  We say $R$ satisfies \emph{$\tau$-ascending chain condition on principal ideals (ACCP)} if for every chain $(a_0) \subseteq (a_1) \subseteq \cdots \subseteq (a_i) \subseteq \cdots$ with $a_{i+1} \mid_{\tau} a_i$, there exists an $N\in \N$ such that $(a_i)=(a_N)$ for all $i>N$.
\\
\indent A ring $R$ is said to be a \emph{$\tau$-$\alpha$-$\beta$-unique factorization ring (UFR)} if (1) $R$ is $\tau$-$\alpha$ and (2) for every non-unit $a \in R$ any two $\tau$-$\alpha$ factorizations $a=\lambda_1 a_1 \cdots a_n = \lambda_2 b_1 \cdots b_m$ have $m=n$ and there is a rearrangement so that $a_i$ and $b_i$ are $\beta$.  A ring $R$ is said to be a \emph{$\tau$-$\alpha$-half factorization ring or half factorial ring (HFR)} if (1) $R$ is $\tau$-$\alpha$ and (2) for every non-unit $a \in R$ any two $\tau$-$\alpha$-factorizations have the same length.  A ring $R$ is said to be a \emph{$\tau$-bounded factorization ring (BFR)} if for every non-unit $a \in R$, there exists a natural number $N(a)$ such that for any $\tau$-factorization $a=\lambda a_1 \cdots a_n$, $n \leq N(a)$. A ring $R$ is said to be a \emph{$\tau$-$\beta$-finite factorization ring (FFR)} if for every non-unit $a \in R$ there are only a finite number of non-trivial $\tau$-factorizations up to rearrangement and $\beta$.  A ring $R$ is said to be a \emph{$\tau$-$\beta$-weak finite factorization ring (WFFR)} if for every non-unit $a \in R$, there are only finitely many $b\in R$ such that $b$ is a non-trivial $\tau$-divisor of $a$ up to $\beta$.  A ring $R$ is said to be a \emph{$\tau$-$\alpha$-$\beta$-divisor finite (df)} if for every non-unit $a \in R$, there are only finitely many $\tau$-$\alpha$ $\tau$-divisors of $a$ up to $\beta$.
\\
\indent In \cite[Theorem 4.1]{Mooney}, the author shows the following following diagram ($\nabla$ represents $\tau$ being refinable and associate preserving) holds when $\alpha \neq$ $\tau$-unrefinably atomic.  For completeness, following the diagram, we will show that the implications in the diagram continue to hold even if $\alpha = $ $\tau$-unrefinably atomic.
$$\xymatrix{
            &        \tau\text{-}\alpha \text{-HFR} \ar@{=>}^{\nabla}[dr]     &             &                  &                 \\
\tau\text{-}\alpha\text{-} \beta \text{-UFR} \ar@{=>}[ur] \ar@{=>}^{\nabla}[r]  & \tau\text{-}\beta \text{-FFR} \ar@{=>}[r] \ar@{=>}[d]  & \tau\text{-BFR} \ar@{=>}[r]^{\nabla}& \tau\text{-ACCP} \ar@{=>}^{\nabla}[r]& \tau\text{-}\alpha\\
            & \tau\text{-}\beta\text{-WFFR} \ar@{=>}[d] \ar@{=>}[dl]_{\nabla}                    &              &  \text{ACCP} \ar@{=>}[u]               &                  \\
\tau\text{-}\alpha\  \tau\text{-}\alpha\text{-}\beta \text{-df ring} \ar@{=>}[r]           & \tau\text{-}\alpha\text{-}\beta \text{-df ring} &
            }$$
\begin{theorem} \label{thm: usual diagram} Let $R$ be a commutative ring with $1$ and let $\tau$ be a symmetric relation on $R^{\#}$.  Let $\beta \in \{$ associate, strongly associate, very strongly associate $\}$.  Then we have the following.
\\
(1) If $R$ is a $\tau$-unrefinably atomic-$\beta$-UFR, then $R$ is a $\tau$-unrefinably atomic-HFR.
\\
(2) If $\tau$ is refinable and $R$ is a $\tau$-unrefinably atomic-UFR, then $R$ is a $\tau$-$\beta$-FFR.
\\
(3) If $\tau$ is refinable and $R$ is a $\tau$-unrefinably atomic-HFR, then $R$ is a $\tau$-BFR.
\\
(4) If $R$ is a $\tau$-$\beta$-WFFR, then $R$ is a $\tau$-unrefinably atomic-$\beta$-df ring.
\\
(5) If $\tau$ is refinable and $R$ is a $\tau$-$\beta$-WFFR, then $R$ is a $\tau$-unrefinably atomic $\tau$-unrefinably atomic-$\beta$-df ring.
\\
(6) If $\tau$ is refinable and $R$ satisfies $\tau$-ACCP, then $R$ is $\tau$-unrefinably atomic.
\end{theorem}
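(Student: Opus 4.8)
The plan is to exploit one structural fact throughout: since $\tau$ is refinable, every $\tau$-factorization of a non-unit $a$ can be refined factor-by-factor into a $\tau$-factorization all of whose entries are $\tau$-unrefinably atomic, whenever such atomic factorizations exist. Two of the parts need no refinement at all. For (1), a $\tau$-unrefinably atomic-$\beta$-UFR satisfies, by clause (2) of the UFR definition, that any two $\tau$-unrefinably atomic factorizations have equal length, which is precisely the defining property of a $\tau$-unrefinably atomic-HFR; so (1) is essentially definitional. For (4), I would observe that each $\tau$-unrefinably atomic $\tau$-divisor of $a$ is either a non-trivial $\tau$-divisor of $a$ or, appearing only in trivial factorizations $a=\lambda b$, a strong associate of $a$; hence the $\tau$-$\beta$-WFFR hypothesis already bounds all but the single $\beta$-class of $a$ itself, giving the $\tau$-unrefinably atomic-$\beta$-df conclusion.

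For (2) and (3) I would make the refinement observation precise. Given any $\tau$-factorization $a=\lambda a_1\cdots a_n$, use that $R$ is $\tau$-unrefinably atomic to factor each $a_i$ into $\tau$-unrefinably atomic elements, and invoke refinability to conclude the concatenated product is again a $\tau$-factorization, now $\tau$-unrefinably atomic. For (3), the HFR hypothesis forces every such refined factorization to have one fixed length $\ell(a)$; since refining can only increase the number of factors, every $\tau$-factorization of $a$ has length at most $\ell(a)$, so $N(a)=\ell(a)$ witnesses the BFR property. For (2), the UFR hypothesis says this $\tau$-unrefinably atomic factorization is unique up to rearrangement and $\beta$, so every $\tau$-factorization of $a$ arises by grouping the fixed multiset of $\ell(a)$ atoms into blocks; there are only finitely many such groupings, which yields the FFR conclusion up to rearrangement and $\beta$.

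Parts (5) and (6) reduce, for their finite-factorization half, to the arguments just given ((5)'s df-half is immediate from (4)), so the real content is the existence statement that $R$ is $\tau$-unrefinably atomic. Here I would argue by contradiction, assuming some non-unit $a$ admits no factorization into $\tau$-unrefinably atomic elements, and then repeatedly selecting a non-trivial $\tau$-factor that again fails to be such a product. By refinability each selected element is a $\tau$-divisor of $a$, and we obtain an ascending chain $(a)\subseteq(b_1)\subseteq(b_2)\subseteq\cdots$ with $b_{i+1}\mid_{\tau}b_i$. For (6) the goal is to contradict $\tau$-ACCP directly, and for (5) to contradict $\tau$-$\beta$-WFFR by pigeonholing the $b_i$ among the finitely many non-trivial $\tau$-divisors of $a$.

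I expect the main obstacle to be exactly this termination step, and the difficulty is genuinely a zero-divisor phenomenon: a proper non-trivial $\tau$-factor can generate the \emph{same} principal ideal as the element being factored. The cleanest symptom is an idempotent $e=e\cdot e$, which refines indefinitely while $(e)$ never grows, so the chain $(b_i)$ need not ascend strictly and $\tau$-ACCP alone permits it to stabilize without the refinement terminating. The crux is therefore to arrange the selection so that the principal ideals ascend \emph{strictly}, equivalently to rule out the pathological case in which every non-trivial $\tau$-factorization of a non-atomic element consists entirely of associates of that element. I would attack this by combining refinability with the precise $\mid_{\tau}$ chain condition (and, where the associate relations interact badly with zero-divisors, a présimplifiability-type restriction on $R$) to force strict ascent; this is the step that demands the most care and where the interplay of $\tau$ with $\sim$, $\approx$, and $\cong$ must be used in full, paralleling the corresponding arguments of \cite{Mooney}.
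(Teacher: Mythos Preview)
Your treatment of (1)--(4) matches the paper's proof essentially step for step: (1) is definitional, (4) is the obvious inclusion of $\tau$-unrefinably atomic $\tau$-divisors among all $\tau$-divisors (your extra care about the trivial divisor is harmless), and (2)--(3) both proceed by refining an arbitrary $\tau$-factorization into a $\tau$-unrefinably atomic one and then invoking uniqueness of length or of the whole factorization.

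For (5) and (6), however, the paper takes a much shorter route than you do. It does \emph{not} attempt the direct chain construction you sketch. Instead, for (6) it simply cites \cite[Theorem~4.1]{Mooney}, which already establishes that $\tau$-ACCP with $\tau$ refinable implies $R$ is $\tau$-\emph{very strongly} atomic; the passage from $\tau$-very strongly atomic to $\tau$-unrefinably atomic is then the trivial implication of Theorem~\ref{thm: unrefinable}. For (5), the paper again defers to \cite{Mooney} for the implication ``$\tau$-$\beta$-WFFR with $\tau$ refinable $\Rightarrow$ $\tau$-ACCP'' and then invokes (6). Thus the zero-divisor obstacle you correctly isolate --- an idempotent-like factor $e=e\cdot e$ that refines forever while $(e)$ never grows --- is sidestepped entirely: it was already handled inside the cited result, whose conclusion (very strong atomicity, with its built-in $a\cong a$ clause) is precisely what neutralizes that pathology. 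Your suggestion that a pr\'esimplifiability-type restriction might be needed would weaken the statement beyond what is claimed; the point is that the heavy lifting on this issue lives in \cite{Mooney}, not here.
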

\begin{proof} (1) Let $a\in R$ be a non-unit.  Then there is a unique $\tau$-unrefinably atomic factorization $a=\lambda a_1 \cdots a_n$ up to rearrangement and $\beta$.  Any other $\tau$-unrefinably atomic factorization certainly has the same length, $n$. Hence $R$ is a $\tau$-unrefinably atomic-HFR.
\\
\indent (2) Let $\tau$ is refinable and let $R$ is a $\tau$-unrefinably atomic-UFR.  Let $a\in R$ be a non-unit.  Then any $\tau$-factorization of $a$ can be $\tau$-refined into a $\tau$-unrefinably atomic factorization.  By hypothesis, there is only one unique $\tau$-unrefinably atomic factorization of $a$, say $a=\lambda a_1 \cdots a_n$.  Hence all $\tau$-factorizations of $a$ come from some grouping as a product of the elements from the set $\{a_i \}_{i=1}^{n}$, up to $\beta$.  There are only $2^n$ ways to do this, so this serves as a bound on the number of $\tau$-factorizations up to rearrangement and $\beta$, showing $R$ is a $\tau$-$\beta$-FFR.
\\
\indent (3) Let $\tau$ be refinable and let $R$ be a $\tau$-unrefinably atomic-HFR.  Let $a\in R$ be a non-unit.  $R$ is $\tau$-unrefinably atomic, so there is a $\tau$-unrefinably atomic factorization, say $a=\lambda a_1 \cdots a_n$.  Then since any $\tau$-factorization can be $\tau$-refined to a $\tau$-unrefinably atomic factorization, which must have length $n$, we can see that $n$ will serve as an upper bound on the length of any $\tau$-factorization of $a$.  This shows $R$ is a $\tau$-BFR.
\\
\indent (4) Let $R$ be a $\tau$-WFFR and let $a\in R$ be a non-unit.  Then $a$ has a finite number of $\tau$-divisors up to $\beta$.  Then certainly $a$ has a finite number of $\tau$-unrefinably atomic $\tau$-divisors up to $\beta$.  Hence $R$ is a $\tau$-unrefinably atomic-$\beta$-divisor finite ring.
\\
\indent (5) If $\tau$ is refinable and $R$ is a $\tau$-$\beta$-WFFR.  We have already seen that $R$ is a $\tau$-unrefinably atomic-$\beta$-divisor finite ring.  We need to see that $R$ is $\tau$-unrefinably atomic.  In \cite{Mooney}, it proven that, for $\tau$-refinable, a $\tau$-$\beta$-WFFR satisfies $\tau$-ACCP.  By (6) in the present theorem, this will imply that $R$ is $\tau$-unrefinably atomic.
\\
\indent (6) In \cite[Theorem 4.1]{Mooney}, the author proves that $\tau$-ACCP with $\tau$ refinable implies that $R$ is $\tau$-very strongly atomic.  By Theorem \ref{thm: unrefinable}, a $\tau$-very strongly atomic factorization is certainly $\tau$-unrefinably atomic, proving the claim.  Alternatively, we prove a stronger version of this later in Theorem \ref{thm: ACCP}.
\end{proof}

\section{$\tau$-Complete Factorizations Definitions} \label{sec: complete}
Another approach to factorization studied in the domain case is that of $\tau$-complete factorization.  In some ways, this notion is more natural.  The idea behind complete factorization is simply to factor and element as far as possible.  One says a factorization is complete when it is no longer possible to replace one of the factors with a strictly longer factorization.  In the $\tau$-factorization case, we see $\tau$-complete factorizations have several nice consequences.  Many of the properties such as $\tau$ being divisive, multiplicative, refinable, combinable, associate preserving are no longer necessary for many of the major desirable theorems to hold.  
\\
\indent We begin with some definitions.  Recall that a \emph{$\tau$-refinement} of a $\tau$-factorization $\lambda a_1 \cdots a_n$ is a $\tau$-factorization of the form
$$(\lambda \lambda_1 \cdots \lambda_n)b_{11}\cdots b_{1m_1}\cdot b_{21}\cdots b_{2m_2} \cdots b_{n1} \cdots b_{nm_n}$$
where $a_i=\lambda_ib_{i_1}\cdots b_{i_{m_i}}$ is a $\tau$-factorization for each $i$.  A \emph{$\tau$-complete factorization} is a $\tau$-factorization that cannot be $\tau$-refined into a longer $\tau$-factorization.  $R$ is said to be $\tau$-complete if every non-unit has a $\tau$-complete factorization.  $R$ is said to be \emph{$\tau$-completeable} (resp. \emph{$\tau$-atomicable}, \emph{$\tau$-strongly-atomicable}, \emph{$\tau$-m-atomicable}, \emph{$\tau$-unrefinably atomicable}, \emph{$\tau$-very strongly atomicable}) if every $\tau$-factorization can be $\tau$-refined to a $\tau$-complete (resp. $\tau$-atomic, $\tau$-strongly atomic, $\tau$-m-atomic, $\tau$-unrefinably atomic, $\tau$-very strongly atomic) factorization.  Note that sometimes atomizable is used instead of atomicable, we will use the two interchangeably.
\\
\indent Let $\alpha \in \{ $completable, atomicable, strongly atomicable, m-atomicable, unrefinably atomicable, very strongly atomicable$ \}$ and $\beta\in \{$ associate, strong associate, very strong associate $\}$.  If $\alpha =$ completable, set $\alpha'=$ complete.  If $\alpha=$ atomicable (resp. strongly atomicable, m-atomicable, unrefinably atomicable, very strongly atomicable), set $\alpha'=$ atomic (resp. strongly atomic, m-atomic, unrefinably atomic, very strongly atomic).
\\
\indent We then say $R$ is a \emph{$\tau$-$\alpha$-$\beta$-unique factorization ring (UFR)} if (1) $R$ is $\tau$-$\alpha$ and (2) if $a=\lambda\cdot a_1 \cdots a_n=\mu b_1 \cdots$ are two $\tau$-$\alpha'$ factorizations of a non-unit $a \in R$, then $n=m$ and after re-ordering, if necessary, $a_i$ and $b_i$  are $\beta$ for all $i\in \{1, \ldots , n\}$. $R$ is a \emph{$\tau$-$\alpha$-$\beta$-half factorization ring or half factorial ring (HFR)} if (1) $R$ is $\tau$-$\alpha$ and (2) if $a=\lambda\cdot a_1 \cdots a_n=\mu b_1 \cdots$ are two $\tau$-$\alpha'$ factorizations of a non-unit $a \in R$, then $n=m$.  We say that $R$ is a \emph{$\tau$-complete-$\beta$-finite factorization ring (FFR)} (resp. \emph{$\tau$-complete-bounded factorization ring (BFR)}) if for each non-unit $a\in R$, there are only a finite number of $\tau$-complete factorizations of $a$ up to reordering and $\beta$ (resp. there is a natural number $N(a)$ so that for each $\tau$-complete factorization $a=\lambda a_1 \cdots a_n$, $n \leq N(a)$).  We say $R$ is a \emph{$\tau$-complete-$\beta$-divisor finite ring or $\tau$-$\beta$-cdf ring} if for every non-unit $a\in R$ there are a finite number of divisors up to $\beta$, which appear in a $\tau$-complete factorization of $\beta$.
\begin{theorem} \label{thm: complete-atoms} Let $R$ be a commutative ring with $1$, $\tau$ a symmetric relation on $R^{\#}$.  Let $a=\lambda a_1 \cdots a_n$ be a $\tau$-factorization.  We consider the following statements.
\\
(1) This is a $\tau$-very strongly atomic factorization.
\\
(2) This is a $\tau$-unrefinably atomic factorization.
\\
(3) This is a $\tau$-complete factorization.
\\
(4) This is a $\tau$-strongly atomic factorization.
\\
(5) This is a $\tau$-m-atomic factorization.
\\
(6) This is a $\tau$-atomic factorization.
\\
\indent Let $\nabla$ represent $\tau$ being refinable and $\approx$ represent $R$ is strongly associate.  Then we have the following relationship between the different factorizations.
$$\xymatrix{
\tau\text{-very strongly irred.} \ar@{=>}[r] &\tau\text{-unrefinably irred.}\ar@{=>}[d]\ar@{=>}[dr] \ar@{=>}[r]& \tau\text{-strongly irred.} \ar@{=>}[r]& \tau \text{-irred.}\\
& \tau \text{-complete}\ar@{=>}[ur]_>{\nabla} \ar@/_1pc/[u]_{\nabla} \ar@{=>}[r]^{\nabla}& \tau\text{-m-irred.}\ar@{=>}[u]_{\approx}\ar@{=>}[ur]  & &}$$
\end{theorem}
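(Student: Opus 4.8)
The plan is to split the diagram into two groups of arrows. The first group consists of every arrow that does not touch the $\tau$-complete node; these are all established factor-by-factor. By definition, a factorization $a=\lambda a_1 \cdots a_n$ is a $\tau$-$\alpha'$ factorization precisely when each $a_i$ is a $\tau$-$\alpha'$ element, so any element-level implication already recorded in Theorem \ref{thm: unrefinable} lifts verbatim to the corresponding factorization-level implication simply by applying it to each factor $a_i$ in turn. This immediately yields very strongly irred.\ $\Rightarrow$ unrefinably irred., unrefinably irred.\ $\Rightarrow$ strongly irred., unrefinably irred.\ $\Rightarrow$ m-irred., strongly irred.\ $\Rightarrow$ irred., m-irred.\ $\Rightarrow$ irred., and (under the strongly associate hypothesis $\approx$) m-irred.\ $\Rightarrow$ strongly irred. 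Thus the only genuinely new content concerns the $\tau$-complete node.

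First I would prove the downward arrow, unrefinably atomic $\Rightarrow$ complete, which needs no hypothesis on $\tau$. Suppose $a=\lambda a_1 \cdots a_n$ is a $\tau$-unrefinably atomic factorization and, toward a contradiction, that it $\tau$-refines to a strictly longer $\tau$-factorization. By the definition of refinement each $a_i$ is replaced by a $\tau$-factorization $a_i=\lambda_i b_{i1}\cdots b_{im_i}$, and the total length strictly increases exactly when some $m_i \geq 2$. But then $a_i=\lambda_i b_{i1}\cdots b_{im_i}$ is a non-trivial $\tau$-factorization of $a_i$, contradicting that $a_i$ is $\tau$-unrefinably atomic. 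Hence no strictly longer refinement exists and the factorization is $\tau$-complete.

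For the reverse arrow, complete $\Rightarrow$ unrefinably atomic, I would invoke refinability ($\nabla$). Let $a=\lambda a_1 \cdots a_n$ be $\tau$-complete and suppose some factor $a_i$ admitted a non-trivial $\tau$-factorization $a_i=\mu c_1 \cdots c_k$ with $k\geq 2$. Substituting this in produces the candidate refinement $a=(\lambda\mu)a_1\cdots a_{i-1}c_1\cdots c_k a_{i+1}\cdots a_n$. Because $\tau$ is refinable, every $\tau$-refinement of a $\tau$-factorization is again a $\tau$-factorization, so this candidate is a genuine $\tau$-factorization, and it has length $n-1+k>n$, contradicting $\tau$-completeness. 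Therefore each $a_i$ is $\tau$-unrefinably atomic. The two remaining labeled arrows, complete $\Rightarrow$ strongly irred.\ and complete $\Rightarrow$ m-irred.\ (both under $\nabla$), then follow by composing this implication with the element-level implications unrefinably irred.\ $\Rightarrow$ strongly irred.\ and unrefinably irred.\ $\Rightarrow$ m-irred.\ from the first group.

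The step deserving the most care is the role of refinability in the direction complete $\Rightarrow$ unrefinably atomic. The subtlety is that a factor $a_i$ may possess a non-trivial $\tau$-factorization in isolation, yet reinserting its factors into the surrounding product need not yield a valid $\tau$-factorization unless the new cross relations $b_{ij} \tau a_\ell$ and $b_{ij} \tau b_{i\ell}$ all hold. Refinability is precisely the hypothesis that guarantees these relations automatically, which is exactly why $\nabla$ labels the upward arrows but not the downward one. I would therefore flag explicitly that without refinability a $\tau$-complete factorization can genuinely have factors that are reducible in isolation, so the coincidence of completeness with unrefinable atomicity is special to the refinable setting.
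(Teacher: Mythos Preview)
Your proof is correct and matches the paper's argument on the essential points: the element-level implications are imported from Theorem~\ref{thm: unrefinable}, and the two directions between $\tau$-complete and $\tau$-unrefinably atomic are handled exactly as the paper does (the downward arrow by observing that unrefinably atomic factors admit only trivial $\tau$-factorizations, the upward arrow by invoking refinability to turn a non-trivial $\tau$-factorization of a factor into a strictly longer $\tau$-refinement).

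The one organizational difference is in how you obtain $(3)\Rightarrow(4)$ and $(3)\Rightarrow(5)$. The paper gives separate direct arguments for each, in both cases supposing some $a_i$ fails the relevant irreducibility and deducing a non-trivial $\tau$-factorization of $a_i$ with $m\geq 2$, then using refinability to contradict completeness. You instead factor both through the single implication $(3)\Rightarrow(2)$ already established and then compose with the element-level arrows unrefinably $\Rightarrow$ strongly irreducible and unrefinably $\Rightarrow$ m-irreducible from Theorem~\ref{thm: unrefinable}. Your route is slightly more economical, since the paper's two direct proofs are really re-running the $(3)\Rightarrow(2)$ argument in disguise; nothing is lost by composing, and it makes the logical dependence on refinability more transparent.
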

\begin{proof}Many of these implications were shown in \cite[Theorem 3.9]{Mooney} and are immediate from Theorem \ref{thm: unrefinable}.  We need only prove the arrows entering and exiting from the $\tau$-complete factorizations. 
 %We begin with $(1) \Rightarrow (2)$.  This is immediate since a $\tau$-very strongly atomic element is $\tau$-unrefinably atomic.  
\\
\indent $(2) \Rightarrow (3)$  If $a=\lambda a_1 \cdots a_n$ is a $\tau$-unrefinably atomic factorization, then $a_i$ is $\tau$-unrefinably irreducible and hence has only trivial $\tau$-factorizations.  This means there simply are no refinements of $a_i$ which can possibly increase the length of the factorization making the factorization $\tau$-complete.  If $\tau$ is refinable, we show $(3) \Rightarrow (2)$.  If $a=\lambda a_1 \cdots a_n$ is a complete factorization, then if any $a_i$ had a non-trivial $\tau$-factorization $a_i=\mu a_{i1} \cdots a_{in_i}$ with $n_i \geq 2$, then $a=(\lambda \mu) a_1 \cdots a_{i-1} a_{i1} \cdots a_{in_i} a_{i+1} \cdots a_n$ is a $\tau$-factorization with length $n-1+n_i \geq n+1$ since $n_i \geq 2$ contradicting the fact that the factorization was complete. 
\\
%\indent $(2) \Rightarrow (4)$ If $a=\lambda a_1 \cdots a_n$ is a $\tau$-unrefinably atomic factorization, then we show 
\indent If $\tau$ is refinable, then $(3) \Rightarrow (4)$.  Let $a=\lambda a_1 \cdots a_n$ be a $\tau$-complete factorization.  We show that $a_i$ is $\tau$-strongly atomic for all $1\leq i \leq n$.  Suppose there $a_i$ is not $\tau$-strongly atomic.  Then there is a $\tau$-factorization $a_i=\mu b_1 \cdots b_m$ such that $a_i \not \approx b_j$ for any $1\leq j \leq m$.  In particular, $m\geq 2$, or else we have $a_i=\mu b_1$ and $a_i \approx b_1$, a contradiction.  Because $\tau$ is refinable, we can refine the factorization into 
$$a=(\lambda \mu)a_1 \cdots a_{i-1} b_1 \cdots b_m a_{i+1} \cdots a_n.$$
This is a $\tau$-factorization of strictly longer length contradicting the assumption that the factorization was $\tau$-complete.
\\
\indent If $\tau$ is refinable, then $(3) \Rightarrow (5)$.  Let $a=\lambda a_1 \cdots a_n$ be a $\tau$-complete factorization.  We show that $a_i$ is $\tau$-m-atomic for all $1\leq i \leq n$.  Suppose there $a_i$ is not $\tau$-m-atomic.  Then there is a principal ideal generated by some $b_1 \in R$ such that $b\mid_{\tau} a_i$ and $(a_i) \subsetneq (b_1)$.  Because $b_1\mid_{\tau} a_i$, there exists a $\tau$-factorization of the form $a_i=\mu b_1 \cdots b_m$.  In particular, $m\geq 2$, or else we have $a_i=\mu b_1$ and $a_i \sim b_1$, a contradiction.  Because $\tau$ is refinable, we can refine the factorization into 
$$a=(\lambda \mu)a_1 \cdots a_{i-1} b_1 \cdots b_m a_{i+1} \cdots a_n.$$
This is a $\tau$-factorization of strictly longer length contradicting the assumption that the factorization was $\tau$-complete.
\end{proof}
We now provide examples to show $\tau$-complete factorizations are indeed distinct. 
\begin{example}\ 
\\
\begin{enumerate}
\item Let $R=\Z/2\Z \times \Z/2\Z$ with $\tau=\{((1,0),(1,0))\}$.
\\\\
\indent Consider the $\tau$-factorization $(1,0)=(1,0)(1,0)$.  This is a $\tau$-m-atomic and $\tau$-strongly atomic factorization, but neither a $\tau$-complete nor $\tau$-unrefinably atomic factorization.  To see this, $(1,0)R$ is a maximal ideal since $R/(1,0)R \cong \Z/2\Z$, a field.  If an ideal is maximal, it is certainly maximal among principal ideals, so it is m-atomic and therefor $\tau$-m-atomic.  $R$ is strongly associate, so we know that the factorization is also $\tau$-strongly atomic.  On the other hand, 
$$(1,0)=(1,0)(1,0)=(1,0)\left((1,0)\cdot (1,0)\right)=(1,0)(1,0)(1,0)$$
gives us a $\tau$-refinement of the factorization which is properly longer, showing it is not a $\tau$-complete factorization.  This also shows that the factorization is not $\tau$-unrefinably atomic.
\item Let $R=\Z/2\Z \times \Z/2\Z$ with $\tau=\{((1,0),(0,1)), ((0,1),(1,0))\}$.
\\\\
\indent Consider the $\tau$-factorization $(0,0)=(1,0)(0,1)$.  There are no non-trivial $\tau$-factorizations of $(1,0)$ or $(0,1)$, this makes the factorization both $\tau$-complete and $\tau$-unrefinably atomic since it cannot be refined into any longer $\tau$-factorization.  On the other hand $(1,0)$ is not $\tau$-very strongly atomic because it fails the $(1,0) \cong (1,0)$ part of the definition.  The factorization $(1,0)=(1,0)(1,0)$ and noting that $(1,0)$ is not a unit in $R$ shows this.  Hence we have a $\tau$-complete and $\tau$-unrefinably atomic factorization which is not $\tau$-very strongly atomic.
\\\\
\item Let $R = \Z$ and let $\tau=\{ (\pm 2,\pm 2),(\pm 3,\pm 3), (\pm 4,\pm 9), (\pm 9,\pm 4) \}$.
\\\\
\indent Then $\tau$ is associate preserving and symmetric, but not refinable.  We consider the factorization $36=4\cdot 9$ and notice that $4$ and $9$ are not even $\tau$-irreducible since $4=2\cdot 2$ and $9=3 \cdot 3$ are $\tau$-factorizations which show $4$ and $9$ are not $\tau$-atomic.  On the other hand, we see that this factorization is $\tau$-complete since $\pm 2 \not \tau \pm 3, \pm 9$ and $\pm 3 \not \tau \pm 2, \pm 4$, so no $\tau$-refinements of this factorization are valid $\tau$-factorizations, so the factorization is $\tau$-complete despite not being $\tau$-atomic, strongly atomic, m-atomic, unrefinably atomic, or very strongly atomic.
\\\\
\indent Examples given in \cite{Valdezleon} show that the other arrows are not reversible even when $\tau = R^{\#}\times R^{\#}$.
\end{enumerate}
\end{example}
As in a series of papers by A. Bouvier, \cite{bouvier71, bouvier72a, bouvier72b, bouvier74}, a commutative ring is said to be \emph{pr\'esimplifiable} if $x=xy$ for some $x,y\in R$ implies $x=0$ or $y\in U(R)$.  A nice property of the various $\tau$-irreducibles defined in \cite{Mooney} is that they all coincide when a ring is pr\'esimplifiable.   When $R$ is pr\'esimplifiable and $\tau$ is refinable, we can add $\tau$-complete factorizations to the list of equivalent $\tau$-irreducible factorizations.  We summarize this in the following theorem which follows relatively easily from Theorem \ref{thm: complete-atoms}.
\begin{theorem}\label{prp: presimplifiable} Let $R$ be pr\'esimplifiable and $\tau$ be a symmetric, refinable relation on $R^{\#}$.  For a $\tau$-factorization $a=\lambda a_1 \cdots a_n$, the following are equivalent.
\\
(1) This is a $\tau$-very strongly atomic factorization.
\\
(2) This is a $\tau$-unrefinably atomic factorization.
\\
(3) This is a $\tau$-complete factorization.
\\
(4) This is a $\tau$-strongly atomic factorization.
\\
(5) This is a $\tau$-m-atomic factorization.
\\
(6) This is a $\tau$-atomic factorization.
\end{theorem}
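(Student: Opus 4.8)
The plan is to read everything off Theorem~\ref{thm: complete-atoms}, supplying the two ``upward'' implications that the diagram there lacks by exploiting pr\'esimplifiability. The first observation I would record is that in a pr\'esimplifiable ring every element satisfies $a \cong a$: indeed, if $a = ra$ then pr\'esimplifiability forces $a = 0$ or $r \in U(R)$, which is precisely condition (2) in the definition of $\cong$. Consequently $R$ is very strongly associate, hence strongly associate, so the arrow labelled $\approx$ in Theorem~\ref{thm: complete-atoms} holds unconditionally; and since $\tau$ is assumed refinable, the arrows labelled $\nabla$ hold as well.

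The heart of the argument is that pr\'esimplifiability collapses the top of the hierarchy. Because $a \cong a$ is automatic for every non-unit $a$, the definition of a $\tau$-very strongly irreducible element reduces to ``$a$ has no non-trivial $\tau$-factorization,'' which is exactly $\tau$-unrefinably irreducible; thus (1) and (2) describe the same factorizations. At the same time, the coincidence of the four $\tau$-irreducibility notions of \cite{Mooney} in pr\'esimplifiable rings says that an element is $\tau$-atomic if and only if it is $\tau$-strongly atomic, if and only if it is $\tau$-m-atomic, if and only if it is $\tau$-very strongly atomic; applying this factorwise shows that statements (1), (4), (5), and (6) are equivalent as factorization properties.

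It remains only to insert $\tau$-completeness, statement (3), into this common equivalence class. From Theorem~\ref{thm: complete-atoms} I have $(2) \Rightarrow (3)$ with no hypotheses, and $(3) \Rightarrow (4)$ because $\tau$ is refinable; since (2) and (4) already lie in the same class, (3) is squeezed between them and joins it. Equivalently, one may trace the cycle $(1) \Rightarrow (2) \Rightarrow (3) \Rightarrow (4) \Rightarrow (6) \Rightarrow (1)$, with $(3) \Rightarrow (5) \Rightarrow (6)$ accounting for (5). There is no serious obstacle here: the only delicate point is that Theorem~\ref{thm: complete-atoms} by itself only flows ``downward'' from the stronger to the weaker notions, and pr\'esimplifiability is exactly what reverses this---recognizing that the $\cong$-self-condition becomes vacuous, so that the strongest notions are no harder to satisfy than $\tau$-atomicity, is the crux of the proof.
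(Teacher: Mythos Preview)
Your proposal is correct and follows essentially the same approach as the paper: both rely on Theorem~\ref{thm: complete-atoms} together with the fact that pr\'esimplifiability collapses the associate relations (so that in particular each $a_i \cong a_i$ holds automatically and the $\tau$-irreducibility notions of \cite{Mooney} coincide). The paper's proof is terser, singling out only the implication $(5)\Rightarrow(1)$ as the new step and leaving everything else to Theorem~\ref{thm: complete-atoms}; your version is a bit more explicit in invoking the full coincidence from \cite{Mooney} to handle $(1),(4),(5),(6)$ simultaneously and then squeezing $(3)$ in between $(2)$ and $(4)$, but this is an organizational difference rather than a different idea.
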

\begin{proof}In a pr\'esimplifiable ring, all the associate relations coincide.  We have $x\sim y \Rightarrow x \cong y$, in particular $a_i \sim a_i \Rightarrow  a_i \cong a_i$, so we have $(5) \Rightarrow (1)$.  This coupled with what was shown in Theorem \ref{thm: complete-atoms} completes the proof.
\end{proof}
\begin{theorem} \label{lem: complete refinement} Let $R$ be a commutative ring with $1$ and let $\tau$ be a refinable, symmetric relation on $R^{\#}$.  If $a=\lambda a_1 \cdots a_n$ is a $\tau$-factorization and $a_i=\lambda_i b_{i1} \cdots b_{im_i}$ are $\tau$-complete factorizations for $1 \leq i \leq n$, then 
\begin{equation} \label{eq: refinement}
a=(\lambda \lambda_1 \cdots \lambda_n) b_{11} \cdots b_{1m_1} b_{21} \cdots b_{2m_2} \cdots b_{n1}\cdots b_{nm_n}
\end{equation}
is a $\tau$-complete factorization.
\end{theorem}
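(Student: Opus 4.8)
The plan is to reduce $\tau$-completeness to $\tau$-unrefinable atomicity, exploiting the equivalence between these two notions that holds precisely because $\tau$ is refinable. Recall from Theorem \ref{thm: complete-atoms} that the implication $(2)\Rightarrow(3)$ holds for any symmetric $\tau$, while the reverse implication $(3)\Rightarrow(2)$ holds once $\tau$ is refinable; combined, these say that a $\tau$-factorization is $\tau$-complete if and only if every one of its factors is $\tau$-unrefinably irreducible, i.e.\ admits only trivial $\tau$-factorizations. The whole argument is then a matter of applying these two implications in turn.

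First I would apply $(3)\Rightarrow(2)$ of Theorem \ref{thm: complete-atoms}, valid here since $\tau$ is refinable, to each of the hypothesized $\tau$-complete factorizations $a_i=\lambda_i b_{i1}\cdots b_{im_i}$. This shows that every factor $b_{ij}$ is $\tau$-unrefinably irreducible. Next I would verify that the displayed product \eqref{eq: refinement} is genuinely a $\tau$-factorization: by construction it is exactly the $\tau$-refinement of the $\tau$-factorization $a=\lambda a_1\cdots a_n$ assembled from the factorizations $a_i=\lambda_i b_{i1}\cdots b_{im_i}$, and since $\tau$ is refinable every $\tau$-refinement of a $\tau$-factorization is again a $\tau$-factorization. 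Finally, the factors of \eqref{eq: refinement} are precisely the $b_{ij}$, all of which I have already shown to be $\tau$-unrefinably irreducible, so \eqref{eq: refinement} is a $\tau$-unrefinably atomic factorization; applying $(2)\Rightarrow(3)$ of Theorem \ref{thm: complete-atoms} then yields that \eqref{eq: refinement} is $\tau$-complete, as desired.

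The step deserving the most care is the use of refinability, which enters in two logically distinct places: once to force each $\tau$-complete factorization $a_i=\lambda_i b_{i1}\cdots b_{im_i}$ to have $\tau$-unrefinably irreducible factors, and once to guarantee that the assembled product is even a $\tau$-factorization in the first place. Without refinability the former can fail entirely—the $\Z$-example preceding this result exhibits a $\tau$-complete factorization whose factors are not $\tau$-atomic at all—so the conclusion genuinely depends on this hypothesis rather than being a formal consequence of the definitions. If one prefers to sidestep the equivalence, the same conclusion follows by a direct contradiction argument: were \eqref{eq: refinement} not $\tau$-complete, it would admit a strictly longer $\tau$-refinement, forcing some $b_{ij}$ to carry a nontrivial $\tau$-factorization; substituting that factorization back in place of $b_{ij}$ inside $a_i=\lambda_i b_{i1}\cdots b_{im_i}$ is legitimate because $\tau$ is refinable, and it produces a strictly longer $\tau$-factorization of $a_i$, contradicting the $\tau$-completeness of that factorization.
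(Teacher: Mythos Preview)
Your proof is correct. Your primary argument routes through Theorem~\ref{thm: complete-atoms}, using the equivalence (under refinability) between $\tau$-complete and $\tau$-unrefinably atomic factorizations to reduce the question to checking that every $b_{ij}$ is $\tau$-unrefinably irreducible; the paper instead argues the contrapositive directly, supposing a strictly longer $\tau$-refinement of \eqref{eq: refinement} exists, extracting a nontrivial $\tau$-factorization of some $b_{ij}$, and substituting it into $a_i=\lambda_i b_{i1}\cdots b_{im_i}$ to contradict that factorization's completeness. The alternative you sketch in your final paragraph is essentially verbatim the paper's proof, so the two approaches are close cousins: yours has the virtue of isolating the key fact as a single citation to Theorem~\ref{thm: complete-atoms}, while the paper's keeps the argument self-contained. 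Your remark that refinability enters in two logically distinct places (once to ensure \eqref{eq: refinement} is a $\tau$-factorization at all, once to push a refinement of $b_{ij}$ back into the factorization of $a_i$, or equivalently to invoke $(3)\Rightarrow(2)$) is a point the paper leaves implicit.
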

\begin{proof} Because $\tau$ is refinable, the factorization given in \ref{eq: refinement} is certainly a $\tau$-factorization.  It remains to be seen that this factorization is $\tau$-complete.  Taking the notation from the statement of the theorem, we suppose there is a $\tau$-refinement of $b_{ij}$ for some $1\leq i \leq n$ and $1\leq j \leq m_i$ of the form $b_{ij}=\mu c_1 \cdots c_k$ which makes the factorization in equation \eqref{eq: refinement} properly longer.  This also yields a $\tau$-refinement of the $\tau$-complete factorization $a_i=\lambda_i b_{i1} \cdots b_{im_i}$
$$a_i= (\lambda_i\mu) b_{i1} \cdots b_{i(j-1) } c_1 \cdots c_k b_{i(j+1)} \cdots b_{im_i}$$
into a $\tau$-factorization which is properly longer, a contradiction, completing the proof.
\end{proof}

%\begin{theorem}\label{thm: vs atomic}
%A $\tau$-very strongly atomic factorization is $\tau$-complete.  If $\tau$ is refinable and $R$ is pr\'esimplifiable, then a $\tau$-complete factorization is $\tau$-very strongly atomic.
%\end{theorem}
%\begin{proof}Let $a=\lambda a_1 \cdots a_n$ be a $\tau$-very strongly atomic factorization.  Then for all $a_i$, there are only trivial $\tau$-factorizations, and hence any refinement maintains the same length, showing the factorization to be $\tau$-complete.  Converesely, let $a=\lambda a_1 \cdots a_n$ be a $\tau$-complete factorization.  Because $R$ is assumed to be pr\'esimplifiable, we certainly have $a_i\cong a_i$ for all $i$.  So we suppose there were some $1 \leq i_0\leq n$ such that there is a non-trivial $\tau$-factorization $a_{i_0}=\mu b_1 \cdots b_m$ with $m \geq 2$.  We assumed $\tau$ is refinable, so $ a=(\lambda \mu)a_1 \cdots a_{i_0-1} b_1 \cdots b_m a_{i_0+1} \cdots a_n$ is a $\tau$-factorization of strictly longer length, a contradiction.  Thus each $a_i$ is very strongly atomic as desired.
%\end{proof}
\section{$\tau$-Complete Factorization Relationships}
In this section, we look at the relationship between the $\tau$-complete (completable) factorizations defined in Section \ref{sec: complete} and the $\tau$-atomic (atomicable) (resp. strongly atomic (atomicable), m-atomic (atomicable), unrefinably atomic (atomicable), very strongly atomic (atomicable)) factorizations defined in \cite{Mooney}.  
\\
\indent Let $\alpha \in \{ $completable, atomicable, strongly atomicable, m-atomicable, unrefinably atomicable, very strongly atomicable$ \}$.  If $\alpha =$ completable, set $\alpha'=$ complete.  If $\alpha=$ atomicable (resp. strongly atomicable, m-atomicable, unrefinably atomicable, very strongly atomicable), set $\alpha'=$ atomic (resp. strongly atomic, m-atomic, unrefinably atomic, very strongly atomic).
\begin{theorem} \label{thm:big theorem} Let $R$ be a commutative ring with 1.  Let $\tau$ be a symmetric relation on $R^{\#}$.\\
(1) If $R$ is $\tau$-very strongly atomic, then $R$ is $\tau$-unrefinably atomic.\\
(2) If $R$ is $\tau$-unrefinably atomic, then $R$ is $\tau$-complete.\\
(3) If $\tau$ is refinable and $R$ is $\tau$-complete, then $R$ is $\tau$-unrefinably atomic.\\
(4) If $\tau$ is refinable and $R$ is $\tau$-complete, then $R$ is $\tau$-strongly atomic.\\
(5) If $\tau$ is refinable and $R$ is $\tau$-complete, then $R$ is $\tau$-m-atomic.\\
(6) If $R$ is $\tau$-m-atomic and strongly associate, then $R$ is $\tau$-strongly atomic.\\
(7) If $R$ is $\tau$-m-atomic, then $R$ is $\tau$-atomic.\\
(8) If $R$ is $\tau$-strongly atomic, then $R$ is $\tau$-atomic.\\
(9) If $R$ is $\tau$-very strongly atomicable, then $R$ is $\tau$-unrefinably atomicable.\\
(10 If $R$ is $\tau$-unrefinably atomicable, then $R$ is $\tau$-completable.\\
(11) If $\tau$ is refinably and $R$ is $\tau$-completable, then $R$ is $\tau$-unrefinably atomicable.\\
(12) If $\tau$ is refinable and $R$ is $\tau$-completable, then $R$ is $\tau$-strongly atomicable.\\
(13) If $\tau$ is refinable and $R$ is $\tau$-completeable, then $R$ is $\tau$-m-atomicable.\\
(14) If $R$ is $\tau$-m-atomicable and strongly associate, then $R$ is $\tau$-strongly atomicable.\\
(15) If $R$ is $\tau$-m-atomicable, then $R$ is $\tau$-atomicable.\\
(16) If $R$ is $\tau$-strongly atomicable, then $R$ is $\tau$-atomicable.\\
%(2) If $\tau$ is refinable and $R$ is $\tau$-complete, then $R$ is $\tau$-very strongly atomic.\\
(17) If $R$ is $\tau$-$\alpha$, then $R$ is $\tau$-$\alpha'$.\\
(18) If $\tau$ is refinable and $R$ is $\tau$-$\alpha'$, then $R$ is $\tau$-$\alpha$.\\
\indent If $R$ is pr\'esimplifiable, then (1)-(8) are equivalent and (9)-(16) are equivalent.
%(5) If $R$ is $\tau$-very strongly atomicable, then $R$ is $\tau$-completable.\\
\end{theorem}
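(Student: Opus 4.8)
The plan is to reduce the ring-level statement to the factorization-level coincidence already packaged in Theorem~\ref{prp: presimplifiable}, and then lift it. First I would record the standing consequence of pr\'esimplifiability: the condition $x=xy \Rightarrow x=0$ or $y\in U(R)$ forces $a\sim b$ to imply $a\cong b$, so the three associate relations $\sim$, $\approx$, $\cong$ collapse to one. In particular $R$ is strongly (indeed very strongly) associate, which is exactly the side hypothesis needed for implications (6) and (14); and the instance $a_i\sim a_i \Rightarrow a_i\cong a_i$ is precisely what upgrades a $\tau$-m-irreducible factor to a $\tau$-very strongly irreducible one, i.e.\ the extra implication $(5)\Rightarrow(1)$ used in the proof of Theorem~\ref{prp: presimplifiable}.

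Next I would observe that pr\'esimplifiability collapses the five pointwise irreducibility notions at the element level with no further hypothesis. If $a$ is $\tau$-atomic, then in any $\tau$-factorization $a=\mu c_1\cdots c_k$ some $c_j\sim a$, hence $a\cong c_j$; writing $a=r c_j$ with $r=\mu\prod_{l\neq j}c_l$, the very-strong-associate condition forces $r\in U(R)$, so every $c_l$ with $l\neq j$ is a unit. Since the $c_l$ are non-units, this is possible only when $k=1$, so $a$ is $\tau$-unrefinably irreducible. Thus $\tau$-atomic, $\tau$-strongly atomic, $\tau$-m-atomic, $\tau$-unrefinably atomic, and $\tau$-very strongly atomic coincide for elements, whence the five corresponding ring-level properties in (1)--(8) are equivalent, and — reading the phrase \emph{refines to such a factorization} through this coincidence — the five atomicable properties in (9)--(16) are equivalent as well. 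Combining this with the already-established implications (1), (2), (6), (7), (8) (resp.\ (9), (10), (14), (15), (16)) closes the cycle through all of these nodes.

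The remaining, and genuinely delicate, node is $\tau$-complete (resp.\ $\tau$-completable). For the easy direction I would use implication (2) (resp.\ (10)): an unrefinably atomic factorization is always complete, so no hypothesis is needed there. For the reverse direction I would invoke implication (3) (resp.\ (11)) to conclude that a $\tau$-complete factorization is $\tau$-unrefinably atomic and hence joins the equivalence class. This is the step I expect to be the main obstacle, because completeness is a \emph{global} property of a whole factorization rather than a pointwise irreducibility condition on its factors; pr\'esimplifiability alone collapses only the five pointwise notions. It is the refinability hypothesis already present in (3), (4), (5) and (11), (12), (13) — and underlying Theorem~\ref{prp: presimplifiable} — that licenses refining a factor's nontrivial $\tau$-factorization back into the ambient product, thereby forcing a complete factorization to be unrefinably atomic. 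With the complete node attached, all of (1)--(8) lie in a single equivalence class, and the identical argument carried out at the level of refinements gives the equivalence of (9)--(16).
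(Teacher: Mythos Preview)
Your approach is essentially the paper's—reduce the pr\'esimplifiable equivalence to the factorization-level coincidence of Theorem~\ref{prp: presimplifiable}, which the paper's proof invokes in a single sentence. You go a bit further by isolating that only the $\tau$-complete node genuinely needs refinability (your direct element-level argument already collapses the other five irreducibility notions under pr\'esimplifiability alone), and this extra care is warranted: the paper's own $\Z$ example with a non-refinable $\tau$ shows the final sentence tacitly requires $\tau$ refinable, exactly as Theorem~\ref{prp: presimplifiable} does.
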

\begin{proof} (1) (resp. (2), (3), (4), (5), (6), (7), (8)) Let $a \in R^{\#}$.  Let $a\in R$ be a non-unit.  Because $R$ is $\tau$-very strongly atomic (resp. $\tau$-unrefinably atomic, $\tau$-complete, $\tau$-complete, $\tau$-complete, $\tau$-m-atomic, $\tau$-m-atomic,  $\tau$-strongly atomic) we have a $\tau$-factorization $a=\lambda a_1 \cdots a_n$ which is $\tau$-very strongly atomic (resp. $\tau$-unrefinably atomic, $\tau$-complete, $\tau$-complete, $\tau$-complete, $\tau$-m-atomic, $\tau$-m-atomic,  $\tau$-strongly atomic).  We now apply Theorem \ref{thm: complete-atoms} and the hypothesis to conclude that this factorization is $\tau$-unrefinably atomic (resp. $\tau$-complete, $\tau$-unrefinably atomic, $\tau$-strongly atomic, $\tau$-m-atomic, $\tau$-strongly atomic, $\tau$-atomic, $\tau$-atomic), proving the claim.
\\
\indent (9) (resp. (10), (11), (12), (13), (14), (15), (16)) Let $a=\lambda a_1 \cdots a_n$ be a $\tau$-factorization.  Because $R$ is $\tau$-very strongly atomicable (resp. $\tau$-unrefinably atomicable, $\tau$-completable, $\tau$-completeable, $\tau$-completable, $\tau$-m-atomicable, $\tau$-m-atomicable,  $\tau$-strongly atomicable) we can $\tau$-refine this factorization into a $\tau$-factorization $a=\lambda b_1 \cdots b_m$ which is $\tau$-very strongly atomic (resp. $\tau$-unrefinably atomic, $\tau$-complete, $\tau$-complete, $\tau$-complete, $\tau$-m-atomic, $\tau$-m-atomic,  $\tau$-strongly atomic).  By Theorem \ref{thm: complete-atoms} and the hypothesis, this factorization is $\tau$-unrefinably atomic (resp. $\tau$-complete, $\tau$-unrefinably atomic, $\tau$-strongly atomic, $\tau$-m-atomic, $\tau$-strongly atomic, $\tau$-atomic, $\tau$-atomic).  This proves any $\tau$-factorization can be $\tau$-refined to a $\tau$-unrefinably atomic (resp. $\tau$-complete, $\tau$-unrefinably atomic, $\tau$-strongly atomic, $\tau$-m-atomic, $\tau$-strongly atomic, $\tau$-atomic, $\tau$-atomic) factorization as desired.
\\
\indent (17) Let $R$ be $\tau$-$\alpha$.  Let $a\in R^{\#}$.  Then $a=1 \cdot a$ is a $\tau$-factorization and thus it can be $\tau$-refined into a $\tau$-$\alpha'$-factorization.  Thus for any non-zero, non-unit, we can find a $\tau$-$\alpha'$-factorization proving $R$ is $\alpha'$ as desired.
\\
\indent (18) Let $R$ be $\alpha'$ with $\tau$-refinable.  Let $a=\lambda a_1 \cdots a_n$ be a $\tau$-factorization.  $R$ is $\alpha'$, so let 
$$a_i = \lambda_i b_{1i} \cdots b_{m_i i}$$
be a $\tau$-$\alpha'$ factorization of $a_i$ for $1 \leq i \leq n$.  By hypothesis $\tau$-is refinable, so 
\begin{equation} \label{eq:factor} a=(\lambda \lambda_1 \cdots \lambda_n) b_{11} \cdots b_{m_1 1} \cdots b_{12} \cdots b_{m_2 2} \cdots b_{1n} \cdots b_{m_nn}
\end{equation}
is a $\tau$-factorization.  Furthermore, for $\alpha' \in \{$ atomic, strongly atomic, m-atomic, unrefinably atomic very strongly atomic $\}$, we can immediately conclude this factorization is $\tau$-$\alpha'$, proving the claim.  
\\
\indent For $\alpha'=$ complete, we apply Theorem \ref{lem: complete refinement} to Equation \eqref{eq:factor} to see that for a refinable $\tau$, a $\tau$-factorization comprised of $\tau$-complete parts remains $\tau$-complete.  
\\
\indent The final sentence follows from an application of Theorem \ref{prp: presimplifiable} since all these factorization types coincide.
\end{proof}
%\indent Let $R$ be a commutative ring with $1$ and $\tau$ a symmetric relation on $R^{\#}$.  
The following diagrams help summarize the relationship between the above properties.  Where $\nabla$ indicates $\tau$ is refinable, and $\approx$ indicates $R$ is a strongly associate ring.  $R$ is 
$$\xymatrix{
\tau\text{-very strongly irred.} \ar@{=>}[r] &\tau\text{-unrefinably irred.}\ar@{=>}[d]\ar@{=>}[dr] \ar@{=>}[r]& \tau\text{-strongly irred.} \ar@{=>}[r]& \tau \text{-irred.}\\
& \tau \text{-complete}\ar@{=>}[ur]_>{\nabla} \ar@/_1pc/[u]_{\nabla} \ar@{=>}[r]^{\nabla}&\tau\text{-m-irred.}\ar@{=>}[u]_{\approx}\ar@{=>}[ur]  & &}$$

%$$
%\xymatrix{
%R \text{ is }\tau\text{-very strongly atomic}\ar@{=>}[d]\ar@{=>}[dr] \ar@{=>}[r]&R \text{ is } \tau\text{-strongly atomic} \ar@{=>}[r]&R \text{ is } \tau \text{-atomic}\\
%R \text{ is }\tau \text{-complete}\ar@{=>}[ur]^<{\star} \ar@{=>}[r]^{\star}& R \text{ is }\tau\text{-m-atomic}\ar@{=>}[u]_{\dagger}\ar@{=>}[ur]  & &}
%$$

$$\xymatrix{
\tau\text{-v.s. atomicable} \ar@{=>}[r] &\tau\text{-unref. atomicable}\ar@{=>}[d]\ar@{=>}[dr] \ar@{=>}[r] & \tau\text{-s. atomicable} \ar@{=>}[r]& \tau \text{-atomicable}\\
& \tau \text{-completable}\ar@{=>}[ur]_>{\nabla} \ar@/_1pc/[u]_{\nabla} \ar@{=>}[r]^{\nabla}& \tau\text{-m-atomicable}\ar@{=>}[u]_{\approx}\ar@{=>}[ur]  & &}$$

\indent Let $\alpha \in \{$ completable, atomicable, strongly atomicable, m-atomicable, unrefinably atomicable, very strongly atomicable $\}$.  If $\alpha =$ completable, set $\alpha'=$ complete.  If $\alpha=$ atomicable (resp. strongly atomicable, m-atomicable, unrefinably atomicable, very strongly atomicable), set $\alpha'=$ atomic (resp. strongly atomic, m-atomic, unrefinably atomicable, very strongly atomic). Let $\nabla$ indicate $\tau$-refinable.
$$
\xymatrix{
R \text{ is }\tau\text{-}\alpha \ar@{=>}[d]  \\ %& & R \text{ is }\tau\text{-}\alpha'\ar@{=>}^{\star}[d]\\
R \text{ is }\tau\text{-} \alpha' \ar@/_1pc/_{\nabla}[u] } %& & R \text{ is }\tau\text{-}\alpha }
$$
\indent The following corollary is an immediate consequences of the definitions and Theorem \ref{thm:big theorem} parts (17) and (18).  The proof is clear and thus has been omitted. 
\begin{corollary}\label{cor: able to not} Let $R$ be a commutative ring with $1$ and $\tau$ be a symmetric relation on $R^{\#}$.  Let $\beta \in \{$ associate, strongly associate, very strongly associate $\}$. We have the following.
\\
(1) $R$ is a $\tau$-very strongly atomicable-$\beta$-UFR (resp. $\tau$-very strongly atomicable-HFR, $\tau$-very strongly atomicable $\tau$-very strongly atomic-$\beta$ df ring) implies $R$ is a $\tau$-very strongly atomic-$\beta$-UFR (resp. $\tau$-very strongly atomic-HFR, $\tau$-very strongly atomic $\tau$-very strongly atomic-$\beta$ df ring).
If $\tau$ is refinable, then the converses also hold.
\\
(2) $R$ is a $\tau$-unrefinably atomicable-$\beta$-UFR (resp. $\tau$-unrefinably atomicable-HFR, $\tau$-unrefinably atomicable $\tau$-unrefinably atomic-$\beta$ df ring) implies $R$ is a $\tau$-unrefinably atomic-$\beta$-UFR (resp. $\tau$-unrefinably atomic-HFR, $\tau$-unrefinably atomic $\tau$-unrefinably atomic-$\beta$ df ring).
\\
(3) $R$ is a $\tau$-completeable-$\beta$-UFR (resp. $\tau$-completeable-HFR, $\tau$-completeable $\tau$-$\beta$-cdf ring) implies $R$ is a $\tau$-complete-$\beta$-UFR (resp. $\tau$-complete-HFR, $\tau$-complete $\tau$-$\beta$-cdf-ring).
\\
(4) $R$ is a $\tau$-m-atomicable-$\beta$-UFR (resp. $\tau$-m-atomicable-HFR, $\tau$-m-atomicable $\tau$-m-atomic-$\beta$ df ring) implies $R$ is a $\tau$-m-atomic-$\beta$-UFR (resp. $\tau$-m-atomic-HFR, $\tau$-m-atomic $\tau$-m-atomic-$\beta$ df ring).
\\
(5)  $R$ is a $\tau$-strongly atomicable-$\beta$-UFR (resp. $\tau$-strongly atomicable-HFR, $\tau$-strongly atomicable $\tau$-strongly atomic-$\beta$ df ring) implies $R$ is a $\tau$-strongly atomic-$\beta$-UFR (resp. $\tau$-strongly atomic-HFR, $\tau$-atomic $\tau$-strongly atomic-$\beta$ df ring).
\\
(6) $R$ is a $\tau$-atomicable-$\beta$-UFR (resp. $\tau$-atomicable-HFR, $\tau$-atomicable $\tau$-atomic-$\beta$-df ring) implies $R$ is a $\tau$-atomic-$\beta$-UFR (resp. $\tau$-atomic-HFR, $\tau$-atomic $\tau$-atomic-$\beta$ df ring).
\end{corollary}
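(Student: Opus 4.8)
The plan is to exploit the fact that each of the three ring-theoretic properties in play---UFR, HFR, and df---is defined as a conjunction of two clauses, of which only the first differs between the ``-able'' and non-``-able'' versions. Writing $\alpha$ for one of the ``-able'' conditions and $\alpha'$ for its non-``-able'' counterpart, the $\tau$-$\alpha$-$\beta$-UFR property asserts (i) that $R$ is $\tau$-$\alpha$, together with (ii) that any two $\tau$-$\alpha'$ factorizations of a non-unit have equal length and are pairwise $\beta$ after reordering; meanwhile the $\tau$-$\alpha'$-$\beta$-UFR property asserts (i$'$) that $R$ is $\tau$-$\alpha'$, together with the very same clause (ii). The HFR property and the df property admit identical bookkeeping: in every case the second clause is phrased purely in terms of $\tau$-$\alpha'$ factorizations (respectively $\tau$-$\alpha'$ divisors up to $\beta$), so it is verbatim the same hypothesis in both formulations. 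The first step, then, is simply to set clause (ii) aside as shared and observe that the entire content of the corollary collapses to the relationship between clauses (i) and (i$'$).

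Granting this reduction, the whole corollary follows from Theorem \ref{thm:big theorem}. Part (17) states that $R$ being $\tau$-$\alpha$ implies $R$ being $\tau$-$\alpha'$ with no hypothesis on $\tau$; this furnishes the implication (i) $\Rightarrow$ (i$'$) and hence each of the stated forward implications in (1)--(6). Part (18) supplies the converse (i$'$) $\Rightarrow$ (i) under the extra assumption that $\tau$ is refinable, which is exactly what is needed for the converse asserted in (1) (``If $\tau$ is refinable, then the converses also hold''). Since clause (ii) is shared, in each case it transfers untouched from one side to the other, and combining it with the appropriate direction from (17) or (18) yields the desired implication or bi-implication.

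I would carry this out by fixing one representative case---say the UFR statement with $\alpha=$ very strongly atomicable and $\alpha'=$ very strongly atomic---writing the two definitions side by side to make the verbatim agreement of clause (ii) explicit, and then invoking Theorem \ref{thm:big theorem}(17) and (18). The remaining cases (HFR and df, together with the other five choices of the pair $(\alpha,\alpha')$) are word-for-word identical after the appropriate substitution, so I would dispatch them with a single ``\emph{mutatis mutandis}'' remark rather than repeating the argument five more times. There is no genuine difficulty here---consistent with the author's decision to omit the proof---and the only point demanding care rather than ingenuity is confirming that the finiteness/uniqueness clauses quantify over the same objects in both definitions. In particular one must check that the df entry written in the corollary as ``$\tau$-$\alpha$ $\tau$-$\alpha'$-$\beta$ df ring'' parses as the conjunction of ``$R$ is $\tau$-$\alpha$'' with ``$R$ is a $\tau$-$\alpha'$-$\beta$-df ring,'' so that the df clause counts $\tau$-$\alpha'$ divisors in both versions; with that parsing in hand the argument above applies unchanged, and no factorization-theoretic input beyond Theorem \ref{thm:big theorem} is required.
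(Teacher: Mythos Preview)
Your proposal is correct and matches the paper's own approach exactly: the paper states that the corollary ``is an immediate consequence of the definitions and Theorem~\ref{thm:big theorem} parts (17) and (18)'' and omits the proof, and your argument carries out precisely this reduction by isolating the shared clause (ii) and invoking (17) for the forward implications and (18) for the refinable converse in (1).
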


\begin{theorem} \label{thm: ACCP} Let $R$ be a commutative ring with $1$ and $\tau$ be a refinable, associate preserving, symmetric relation on $R^{\#}$.  If $R$ satisfies $\tau$-ACCP, then
\\
(1) $R$ is $\tau$-very strongly atomic and $\tau$-very strongly atomicable.
\\
(2) $R$ is $\tau$-unrefinably atomic and $\tau$-unrefinably atomicable
\\
(3) $R$ is $\tau$-complete and $\tau$-completable.
\\
(4) $R$ is $\tau$-m-atomic and $\tau$-m-atomicable.
\\
(5) $R$ is $\tau$-strongly atomic and $\tau$-strongly atomicable.
\\
(6) $R$ is $\tau$-atomic and $\tau$-atomicable.
\end{theorem}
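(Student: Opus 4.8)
The plan is to isolate the single substantive implication---that $\tau$-ACCP forces the strongest of the twelve properties---and obtain everything else formally from the machinery already in place. Concretely, I would first prove that $R$ is $\tau$-very strongly atomic. Granting this, Theorem \ref{thm:big theorem}(18), with $\alpha'=$ very strongly atomic and $\tau$ refinable, immediately upgrades it to $R$ being $\tau$-very strongly atomicable, which disposes of part (1) in full. Since $R$ is then simultaneously $\tau$-very strongly atomic and $\tau$-very strongly atomicable, the relevant implications of Theorem \ref{thm:big theorem} yield the remaining ten assertions at once: parts (1)--(8) cascade the atomic conclusion down to $\tau$-unrefinably atomic, $\tau$-complete, $\tau$-m-atomic, $\tau$-strongly atomic and $\tau$-atomic, while parts (9)--(16) cascade the atomicable conclusion to the corresponding atomicable analogues. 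Every hypothesis ``$\tau$ refinable'' appearing in these parts is met, and one can route through $\tau$-complete via parts (4),(5),(12),(13) so that the ``strongly associate'' hypotheses of (6),(14) are never invoked. Thus all of (1)--(6) reduce to the single claim that $\tau$-ACCP gives $\tau$-very strong atomicity.

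For that core claim I would run the ascending-chain argument adapted to $\tau$-factorization. Let $a$ be a non-unit. If $a$ is $\tau$-very strongly atomic we are done via the trivial factorization; otherwise $a$ admits a non-trivial $\tau$-factorization $a=\lambda a_1\cdots a_n$ with $n\geq 2$, in which each $a_i$ is a $\tau$-divisor, so $(a)\subseteq (a_i)$. Using refinability I would recurse on any factor that is not yet $\tau$-very strongly atomic, at each stage selecting a factor whose principal ideal \emph{strictly} contains the previous one. Were the process to continue indefinitely, it would produce an infinite chain $(c_0)\subsetneq (c_1)\subsetneq\cdots$ with $c_{j+1}\mid_{\tau} c_j$, contradicting $\tau$-ACCP; hence after finitely many refinements all factors are $\tau$-very strongly atomic, and refinability guarantees that the assembled product is still a single $\tau$-factorization of $a$.

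The main obstacle is the very-strong-associate condition $c\cong c$ hidden in the definition of $\tau$-very strong atomicity: a factor may fail to be $\tau$-very strongly atomic only because $c\cong c$ fails, while admitting no genuine non-trivial $\tau$-factorization, and in that degenerate situation no refinement lengthens the factorization. Guaranteeing at each non-terminal stage a replacement factor whose ideal strictly ascends---and ruling out this degenerate stall---is exactly where the associate-preserving hypothesis (absent from the weaker Theorem \ref{thm: usual diagram}(6)) is used: associate preservation transports $\tau$ across the $\sim$-class of a factor, so that refinements chosen only up to associates remain legitimate $\tau$-factorizations and the chain stays strictly increasing. Once the chain-building step is secured, the remainder is the bookkeeping described in the first paragraph, and the deductions there complete the proof.
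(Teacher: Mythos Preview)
Your proposal is correct and matches the paper's approach: the paper likewise reduces everything to the single claim that $\tau$-ACCP implies $\tau$-very strong atomicity, then invokes Theorem~\ref{thm: complete-atoms} (equivalently, the cascade in Theorem~\ref{thm:big theorem}) for the weaker atomic properties and Theorem~\ref{thm:big theorem}(18) with $\tau$ refinable for the atomicable versions. The only difference is cosmetic: the paper imports the core claim from \cite[Theorem 4.1]{Mooney} rather than sketching the ascending-chain argument, and applies (18) once at the end rather than first---but the logical structure is the same.
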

\begin{proof} It was shown in \cite[Theorem 4.1]{Mooney}, that for $\tau$ refinable and associate preserving if $R$ satisfies $\tau$-ACCP, then $R$ is $\tau$-very strongly atomic.  Hence for each non-unit $a\in R$, there is a $\tau$-very strongly atomic factorization of $a$.  By Theorem \ref{thm: complete-atoms}, this factorization is also $\tau$-unrefinably atomic, $\tau$-complete, $\tau$-m-atomic, $\tau$-strongly atomic, and $\tau$-atomic.  This shows $R$ is a  $\tau$-very strongly atomic, $\tau$-unrefinably atomic, $\tau$-complete, $\tau$-m-atomic, $\tau$-strongly atomic, and $\tau$-atomic ring.  Lastly, using part (18) of Theorem \ref{thm:big theorem} and the fact that $\tau$ is refinable, we see that $R$ is $\tau$-very strongly atomicable, $\tau$-unrefinably atomicable, $\tau$-completable, $\tau$-m-atomicable, $\tau$-strongly atomicable, and $\tau$-atomicable.
\end{proof}
\indent The following theorem shows that we get a very similar finite factorization diagram compared to the one preceding Theorem \ref{thm: usual diagram} using the $\tau$-complete and $\tau$-completable factorizations.  In many cases, using complete factorizations, we can eliminate the usual requirement that $\tau$ be refinable.

\begin{theorem} \label{thm: main diagram} Let $R$ be a commutative ring with $1$ and $\tau$ be a symmetric relation on $R^{\#}$.  Let $\beta \in \{$ associate, strongly associate, very strongly associate $\}$.  Then we have the following.
\\
(1) If $R$ is a $\tau$-complete (resp. completable)-$\beta$-UFR, then $R$ is a $\tau$-complete (resp. completable)-$\beta$-HFR.
\\
(2) If $R$ is a $\tau$-complete (resp. completable)-$\beta$-UFR, then $R$ is a $\tau$-complete-$\beta$-FFR.
\\
(3) If $R$ is a $\tau$-complete (resp. completable)-$\beta$-UFR, then $R$ is a $\tau$-complete (resp. completable) $\tau$-$\beta$-cdf ring.
\\
(4) If $R$ is a $\tau$-complete (resp. completable)-HFR, then $R$ is a $\tau$-complete-BFR.
\\
(5) If $R$ is a $\tau$-complete-$\beta$-FFR, then $R$ is a $\tau$-complete-BFR.
\\
(6) If $R$ is a $\tau$-complete-$\beta$-FFR, then $R$ is a $\tau$-$\beta$-cdf ring.
\\
(7) For $R$ $\tau$-complete and $\tau$ is refinable (resp. For $R$ $\tau$-completable), if $R$ is a $\tau$-complete-$\beta$-FFR, then $R$ is a $\tau$-complete (resp. completable) $\tau$-complete-$\beta$-divisor finite ring.
\\
(8) For $R$ $\tau$-complete and $\tau$ refinable (resp. For $R$ $\tau$-completable), if $R$ is $\tau$-complete-BFR, then $R$ satisfies $\tau$-ACCP.
\\
\indent This yields the following diagram where $\alpha\in \{$ complete, completable $\}$ and $\dagger$ indicates $\tau$ is refinable and $R$ is $\tau$-complete.
$$\xymatrix{
            &        \tau\text{-}\alpha \text{-HFR} \ar@{=>}[dr]     &                           \\
\tau\text{-}\alpha \text{-} \beta \text{-UFR} \ar@{=>}[ur] \ar@{=>}[dr]\ar@{=>}[dd]    &   & \tau\text{complete-BFR} \ar@{=>}[d]^{\dagger}\\
            &        \tau\text{-complete-}\beta \text{-FFR} \ar@{=>}[ur]\ar@{=>}[dl]_{\dagger} \ar@{=>}[d]             &      \tau\text{-ACCP} \ar@{=>}[d]                     \\
\tau\text{-}\alpha\ \ \tau \text{-} \beta \text{-cdf ring}		\ar@{=>}[r]  				&   \tau\text{-}\beta\text{-cdf ring}              &          \tau\text{-}\alpha       \\
            }$$
\end{theorem}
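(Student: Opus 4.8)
The plan is to treat (1)--(7) as essentially formal consequences of the definitions and the results already in hand, and to concentrate the real effort on (8). For (1), the defining clause of a $\tau$-$\alpha$-$\beta$-UFR is exactly the HFR requirement (equal lengths) together with the extra demand that the factors agree up to $\beta$; forgetting that extra clause yields (1). For (2) and (3) I would observe that in a $\tau$-complete-$\beta$-UFR each non-unit has, up to reordering and $\beta$, a single $\tau$-complete factorization, so there are only finitely many such factorizations (giving the FFR of (2)) and only finitely many elements occurring in them (giving the cdf ring of (3)). Part (4) follows by taking the common length from the HFR property as the bound $N(a)$; parts (5) and (6) follow because a $\tau$-complete-$\beta$-FFR has only finitely many $\tau$-complete factorizations of each $a$, so the maximum of their lengths bounds all of them (5) and the finite union of the factors appearing gives the finitely many divisors (6). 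Part (7) is then just (6) together with the standing hypothesis that $R$ is $\tau$-complete (resp.\ $\tau$-completable), which supplies the missing half of the conclusion, the refinability in the non-resp.\ case being used only to move between the complete and completable versions via Theorem~\ref{thm:big theorem}(18).

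The substance of the theorem is (8), the implication $\tau$-complete-BFR $\Rightarrow$ $\tau$-ACCP, and I would prove it with a strictly decreasing length function. For a non-unit $a$ put $\ell(a)=\sup\{n : a \text{ has a }\tau\text{-factorization of length } n\}$. Since $R$ is $\tau$-complete (resp.\ $\tau$-completable) every $\tau$-factorization refines to a $\tau$-complete one, and any factorization has length at most that of such a refinement, which is bounded by $N(a)$ from the BFR hypothesis; hence $\ell(a)$ is a finite nonnegative integer. Given a chain $(a_0)\subseteq(a_1)\subseteq\cdots$ with $a_{i+1}\mid_{\tau} a_i$, the goal is to show $\ell(a_i)\geq\ell(a_{i+1})$ always, with strict inequality whenever $(a_i)\subsetneq(a_{i+1})$; this makes $\ell(a_0)\geq\ell(a_1)\geq\cdots$ a non-increasing sequence in $\N$ that must eventually be constant, and once it is constant no further strict inclusion can occur, which is exactly $\tau$-ACCP. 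To obtain the estimate, write a witnessing factorization $a_i=\lambda_i a_{i+1} c_{i1}\cdots c_{ik_i}$ in which $a_{i+1}$ appears; a strict inclusion forces this to be non-trivial, so $k_i\geq 1$ and at least one extra non-unit factor is present. Substituting a maximal-length ($=\ell(a_{i+1})$) $\tau$-factorization of $a_{i+1}$ produces a $\tau$-factorization of $a_i$ of length $\ell(a_{i+1})+k_i\geq \ell(a_{i+1})+1$, whence $\ell(a_i)>\ell(a_{i+1})$.

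The main obstacle is the validity of that substitution, and this is exactly where the two hypotheses of (8) do their work. When $\tau$ is refinable the substituted expression is automatically a $\tau$-factorization, so the length estimate is immediate; moreover Theorem~\ref{lem: complete refinement} guarantees that refining each factor to a $\tau$-complete one keeps the whole product $\tau$-complete, so one may equivalently iterate the factorizations $a_i=\lambda_i a_{i+1}c_{i\bullet}$ down a long chain to build a single $\tau$-complete factorization of $a_0$ of length exceeding $N(a_0)$, contradicting the BFR bound outright. The delicate case is $\tau$-completable without refinability, where the naive substitution need not be a $\tau$-factorization: here one cannot directly splice the maximal factorization of $a_{i+1}$ into that of $a_i$, and must instead apply the $\tau$-completable hypothesis to $a_i=\lambda_i a_{i+1}c_{i\bullet}$ to obtain a $\tau$-complete refinement and then argue that it cannot compress the $a_{i+1}$-part below $\ell(a_{i+1})$. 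Establishing this length comparison while using only $\tau$-completability in place of refinability is the step I expect to require the most care, and it is the crux on which the completable half of (8)---and hence the $\dagger$-labelled arrow of the diagram in its full generality---ultimately rests.
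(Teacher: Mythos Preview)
Your plan for (1)--(7) and for the refinable half of (8) is exactly what the paper does: (1)--(7) are read straight off the definitions, and (8) is proved by iterating the witnesses $a_i=\lambda_i a_{i+1}\,b_{i1}\cdots b_{in_i}$ (with $n_i\geq 1$) to build a $\tau$-factorization of $a_1$ of length exceeding $N(a_1)$, then refining it to a $\tau$-complete factorization of at least that length and invoking the BFR bound for a contradiction. Your length function $\ell$ is simply a repackaging of this same iteration; the paper does not introduce it, but the content is identical.

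Your worry about the $\tau$-completable-only case of (8) is well placed, but you should know that the paper does not resolve it either. The paper writes ``Because $\tau$ is refinable, we can create $\tau$-factorizations as follows'' and performs the iterated substitution to obtain the long factorization $\dagger$; only at the very last step --- turning $\dagger$ into a $\tau$-complete factorization --- does it distinguish the two hypotheses, via the parenthetical ``(resp.\ Because $R$ is $\tau$-completable, we can $\tau$-refine the factorization, $\dagger$, into a $\tau$-complete factorization.)''. Thus the paper's argument for the completable case tacitly uses refinability in the iteration step, which is not among the hypotheses there. So your instinct that this is the delicate point is correct; you have simply been more scrupulous than the paper, and the extra device you sketch (applying $\tau$-completability to $a_i=\lambda_i a_{i+1}c_{i\bullet}$ and trying to control the length of the $a_{i+1}$-part of the resulting refinement) already goes beyond what the paper supplies. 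If your goal is to match the paper, write the iteration argument as above and leave the completable case with the same parenthetical the paper uses; if you want to close the gap, you will indeed need a new idea there.
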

\begin{proof} $[(1), (2) \text{ and } (3)]$ Let $R$ be a $\tau$-complete (resp. completable)-$\beta$-UFR.  Then $R$ is $\tau$-complete (resp. completable) by definition.  Furthermore, for a non-unit $a\in R$, if there is precisely one complete $\tau$-complete-factorization up to rearrangement and $\beta$, say $a=\lambda a_1 \cdots a_n$.  Thus certainly the length is unique, proving $R$ is a $\tau$-complete (resp. completable)-$\beta$-HFR.  
\\
\indent This also shows $R$ is a $\tau$-complete-$\beta$-FFR since there is only one $\tau$-complete factorization up to rearrangement and $\beta$.  Furthermore, the only $\tau$-divisors appearing as $\tau$-factors in a complete factorization up to $\beta$ are among the set $\{a_1, \ldots, a_n\}$ and hence there are only finitely many, proving $R$ is a $\tau$-complete (resp. completable) $\tau$-$\beta$-cdf ring.
\\
\indent (4) Let $R$ be a $\tau$-complete (resp. completable)-HFR.  Let $a\in R$ be a non-unit.  Then $a$ has a $\tau$-complete factorization, say $a=\lambda a_1 \cdots a_n$.  We then set $N(a)=n$.  Given any $\tau$-complete factorization, we know it has length $n$, so $R$ is a $\tau$-complete-BFR.
\\
\indent (5) Let $R$ be a $\tau$-complete-$\beta$-FFR.  Let $a\in R$ be a non-unit.  Then there are a finite number of $\tau$-complete factorizations of $a$ up to rearrangement and $\beta$.  Set $N(a)$ equal to the length of the largest such $\tau$-complete factorization.  Given any $\tau$-complete factorization of $a$, it is either among the given factorizations, or there is a rearrangement and switching of $\beta$.  In any case, the $\tau$-factorization has length less than $N(a)$, proving $R$ is a $\tau$-complete-BFR.
\\
\indent (6) Let $R$ be a $\tau$-complete-$\beta$-FFR, and let $a\in R$ be a non-unit.  There are a finite number of $\tau$-complete factorizations up to rearrangement and $\beta$.  Each of these $\tau$-complete factorizations has a finite length.  Thus the set of all $\tau$-factors which occur as a divisor in some $\tau$-complete factorization of $a$ must be finite.
\\
\indent (7) This is immediate from (6) and the definitions.
\\
\indent (8) Let $R$ be a $\tau$-complete-BFR.  We suppose there is an ascending chain $(a_1) \subsetneq (a_2) \subsetneq \cdots \subsetneq (a_i) \subsetneq \cdots$ of principal ideals such that $a_{i+1} \mid_{\tau} a_i$.  Let $N(a_1)$ be the bound on the length of the $\tau$-complete factorizations of $a_1$.  We have $\tau$-factorizations
$a_{i}=\lambda_i a_{i+1} b_{i1}\cdots b_{in_i}$ for each $i$.  We note here that $n_i \geq 1$ or else we would have $(a_i)=(a_{i+1})$.  Because $\tau$ is refinable, we can create $\tau$-factorizations as follows:
$$a_1=\lambda_1 a_{2} b_{11}\cdots b_{1n_1}=\lambda_1\lambda_2 a_{3} b_{21}\cdots b_{2n_2}b_{11}\cdots b_{1n_1}= \cdots .$$
After $N(a_1)$ iterations, we will arrive at a $\tau$-factorization, $\dagger$, of length at least $N(a_1)$ since at each stage the length increases by at least $1$.  Now $\tau$ is refinable and $R$ is $\tau$-complete, so we apply Theorem \ref{lem: complete refinement} to $\tau$-refine the $\tau$-factorization, $\dagger$, of length $N(a_1)$ into a $\tau$-complete factorization. (resp. Because $R$ is $\tau$-completable, we can $\tau$-refine the factorization, $\dagger$, into a $\tau$-complete factorization.) This can only increase the length of the factorization which contradicts the fact that $N(a_1)$ is the bound on the length of $\tau$-complete factorizations of $a_1$.  
%\\
%\indent In fact, we see from the proof that the bound on the length of $\tau$-complete-factorizations of a non-unit $a\in R$ serves as a bound on the length of any properly ascending chain of principal ideals ascending from $a$ such that each principal ideal generator $\mid_\tau$ the preceeding one.
\end{proof}
We get a similar analogue for $\tau$-atomicable, strongly atomicable, m-atomicable, unrefinably atomicable, very strongly atomicable rings.
\begin{theorem} \label{thm: ufr to hfr} Let $R$ be a commutative ring with $1$ and $\tau$ be a symmetric relation on $R^{\#}$.  Let $\alpha \in \{ $atomicable, strongly atomicable, m-atomicable, unrefinably atomicable, very strongly atomicable $\}$.  If $\alpha=$ atomicable (resp. strongly atomicable, m-atomicable, unrefinably atomicable, very strongly atomicable), set $\alpha'=$ atomic (resp. strongly atomic, m-atomic, unrefinably atomic, very strongly atomic).  Let $\beta \in \{ $associate, strongly associate, very strongly associate $\}$.  Then we have the following.
\\
(1) If $R$ is a $\tau$-$\alpha$-$\beta$-UFR, then $R$ is a $\tau$-$\alpha$-HFR.
\\
(2) If $\tau$ is refinable and $R$ is a $\tau$-$\alpha$-$\beta$-UFR, then $R$ is a $\tau$-$\beta$-FFR.
\\
(3) If $R$ is a $\tau$-$\alpha$-$\beta$-UFR, then $R$ is a $\tau$-$\alpha$ $\tau$-$\alpha'$-$\beta$-divisor finite ring.
\\
(4) If $\tau$ is refinable and $R$ is a $\tau$-$\alpha$-$\beta$-HFR, then $R$ is a $\tau$-BFR.
\\
(5) If $R$ is a $\tau$-$\alpha$ $\tau$-$\alpha'$-$\beta$-divisor finite ring, then $R$ is a $\tau$-$\alpha'$-$\beta$-divisor finite ring.
\\
(6) If $\tau$ is refinable and $R$ is a $\tau$-$\beta$-WFFR, then $R$ is a $\tau$-$\alpha$ $\tau$-$\alpha'$-$\beta$-divisor finite ring.
\\
The following diagram summarizes the relationship between these new finite factorization properties that $R$ might possess, where $\nabla$ indicates $\tau$ is refinable:
$$\xymatrix{
            &        \tau\text{-}\alpha \text{-HFR} \ar@{=>}[dr]^{\nabla}     &                   &        \\
\tau\text{-}\alpha \text{-} \beta \text{-UFR} \ar@{=>}[ur] \ar@{=>}[r]^{\nabla} \ar@{=>}[dd]    &  \tau\text{-complete-}\beta \text{-FFR} \ar@{=>}[r] \ar@{=>}[d]  & \tau\text{-BFR} \ar@{=>}[r]^{\nabla} & \tau\text{-ACCP} \ar@{=>}[d] \\
            &         \tau\text{-}\beta\text{-WFFR}  \ar@{=>}[dl]_{\nabla}     \ar@{=>}[d]       &           &        \tau\text{-}\alpha        \\
\tau\text{-}\alpha\ \ \tau \text{-}\alpha'\text{-} \beta \text{-df ring}		\ar@{=>}[r]  				&   \tau\text{-}\alpha'\text{-}\beta\text{-df ring}              &             &   \\
            }$$
\end{theorem}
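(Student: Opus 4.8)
The plan is to mirror the two diagrams already in hand---the $\tau$-complete version (Theorem~\ref{thm: main diagram}) and the $\tau$-irreducible hierarchy (Theorem~\ref{thm: usual diagram})---while exploiting the single structural fact behind every ``-able'' property: by definition $R$ being $\tau$-$\alpha$ means that \emph{every} $\tau$-factorization of a non-unit can be $\tau$-refined to a $\tau$-$\alpha'$ factorization, and a refinement never decreases length. Parts (1) and (5) are then immediate from the definitions. A $\tau$-$\alpha$-$\beta$-UFR is $\tau$-$\alpha$ and has $\tau$-$\alpha'$ factorizations that are unique up to length and $\beta$; in particular these all have the same length, which is exactly the $\tau$-$\alpha$-HFR condition, giving (1). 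A $\tau$-$\alpha$ $\tau$-$\alpha'$-$\beta$-df ring is by definition a $\tau$-$\alpha'$-$\beta$-df ring that is moreover $\tau$-$\alpha$, so discarding the second conjunct yields (5).

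For (2), (3) and (4) I would fix a non-unit $a$ and work from its essentially unique $\tau$-$\alpha'$ factorization $a=\lambda a_1\cdots a_n$, which exists because $R$ is $\tau$-$\alpha$ and is unique up to rearrangement and $\beta$ by the UFR (resp.\ has fixed length $n$ by the HFR). For (4), $\tau$-$\alpha$-ness lets me $\tau$-refine any given $\tau$-factorization of $a$ to a $\tau$-$\alpha'$ factorization; since refinement cannot shorten and the HFR forces length $n$, the original had length at most $n$, so $N(a)=n$ witnesses the $\tau$-BFR property, refinability being what legitimizes the possibly iterated refinement. For (2), the same refinement carries an arbitrary $\tau$-factorization $a=\mu c_1\cdots c_k$ onto the unique $\tau$-$\alpha'$ factorization, so each $c_j$ is, up to $\beta$, the product over one block of a set partition of $\{a_1,\ldots,a_n\}$; hence up to rearrangement and $\beta$ a $\tau$-factorization of $a$ is encoded by a set partition of an $n$-element set, and there are only finitely many of those, giving the $\tau$-$\beta$-FFR. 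For (3), any $\tau$-$\alpha'$ divisor $d$ of $a$ sits in a $\tau$-factorization witnessing $d\mid_{\tau}a$, which refines to the unique $\tau$-$\alpha'$ factorization; the factors into which $d$ refines are therefore among $\{a_1,\ldots,a_n\}$ up to $\beta$, and irreducibility of $d$ identifies $d$ itself with one of these finitely many elements up to $\beta$.

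Part (6) and the auxiliary arrows of the diagram I would assemble from prior results. The divisor-finiteness half of (6) is immediate, since a $\tau$-$\beta$-WFFR has only finitely many $\tau$-divisors of $a$ up to $\beta$ and the $\tau$-$\alpha'$ divisors form a subset. For the $\tau$-$\alpha$ half I would run the chain already used in Theorem~\ref{thm: usual diagram}(5): for refinable $\tau$ a $\tau$-$\beta$-WFFR satisfies $\tau$-ACCP (\cite{Mooney}), $\tau$-ACCP with $\tau$ refinable gives $\tau$-very strongly atomic (\cite{Mooney}, as in Theorem~\ref{thm: usual diagram}(6)), Theorem~\ref{thm: complete-atoms} promotes this to every $\tau$-$\alpha'$ property, and Theorem~\ref{thm:big theorem}(18) converts each $\tau$-$\alpha'$ property into the matching $\tau$-$\alpha$ property using refinability. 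The remaining summarizing arrows---$\tau$-$\beta$-FFR to $\tau$-BFR and to $\tau$-$\beta$-WFFR, $\tau$-BFR to $\tau$-ACCP (under refinability), $\tau$-ACCP to $\tau$-$\alpha$, and $\tau$-$\beta$-WFFR to $\tau$-$\alpha'$-$\beta$-df---are standard finiteness implications or direct restatements of the corresponding arrows of Theorem~\ref{thm: usual diagram} and Theorem~\ref{thm: ACCP}, so I would dispatch them by citation rather than reprove them.

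The step I expect to fight with is (3). The sentence ``$d$ equals some $a_i$ up to $\beta$'' hides real bookkeeping: $\tau$-atomic (resp.\ $\tau$-m-atomic) irreducibility of $d$ only returns a refined factor $d_j$ with $d\sim d_j$ (resp.\ with $d\sim d_j$ for every $j$), while the UFR uniqueness is phrased up to $\beta$, which may be the finer $\approx$ or $\cong$. One must therefore check that the association type produced by irreducibility is compatible with $\beta$, invoking the strongly associate hypothesis where the $\approx$-labeled arrow of Theorem~\ref{thm: complete-atoms} is needed and falling back on the cases $\alpha'\in\{$unrefinably atomic, very strongly atomic$\}$, where $d$ has no nontrivial $\tau$-factorization and the identification with a single $a_i$ is forced. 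Making these case distinctions uniform across all five choices of $\alpha'$ and all three choices of $\beta$ is the only place the argument is more than a one-line consequence of uniqueness.
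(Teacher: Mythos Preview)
Your plan is correct and parallels the paper's argument closely; the paper just packages (2) and (4) more economically by first invoking Corollary~\ref{cor: able to not} to pass from $\tau$-$\alpha$-$\beta$-UFR (resp.\ HFR) down to $\tau$-$\alpha'$-$\beta$-UFR (resp.\ HFR) and then citing the $\alpha'$ diagram of Theorem~\ref{thm: usual diagram}, rather than reproving those implications inline as you do. Your treatment of (1), (5), (6) and the auxiliary diagram arrows matches the paper essentially verbatim. One small remark on (4): since $R$ being $\tau$-$\alpha$ already \emph{by definition} guarantees that the refinement of any $\tau$-factorization to a $\tau$-$\alpha'$ one is itself a $\tau$-factorization, your appeal to refinability ``to legitimize the iterated refinement'' is not actually where refinability enters.

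Your anticipated difficulty in (3) is self-inflicted. The paper simply asserts that the factors $a_1,\ldots,a_n$ of the unique $\tau$-$\alpha'$ factorization exhaust the $\tau$-$\alpha'$ divisors of $a$ up to $\beta$, and the clean way to see this avoids your case analysis entirely: given a $\tau$-$\alpha'$ $\tau$-divisor $d$ of $a$, use $\tau$-$\alpha$ to refine a witnessing $\tau$-factorization $a=\mu d c_1\cdots c_k$ to a $\tau$-$\alpha'$ one, in which $d$ is replaced by some $d=\nu d_1\cdots d_l$ with each $d_j$ $\tau$-$\alpha'$. Now apply the UFR hypothesis \emph{to $d$}, not to $a$: the trivial factorization $d=1\cdot d$ is $\tau$-$\alpha'$, so uniqueness forces $l=1$ and $d$ $\beta$-equivalent to $d_1$. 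Comparing the refined factorization of $a$ with the unique one gives $d_1$ $\beta$-equivalent to some $a_i$, and hence $d$ is $\beta$-equivalent to $a_i$. This sidesteps the mismatch between the associate type produced by irreducibility and the chosen $\beta$, so the fifteen-case bookkeeping you were bracing for is unnecessary.
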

\begin{proof} (1) Suppose $R$ is a $\tau$-$\alpha$-$\beta$-UFR.  By hypothesis $R$ is $\tau$-$\alpha$.  Let $a$ be a non-unit.  Suppose $a=\lambda a_1 \cdots a_n= \mu b_1 \cdots b_m$ were two $\tau$-$\alpha'$ factorizations with different lengths.  Then this contradicts the fact that $R$ is a $\tau$-$\alpha$-$\beta$-UFR, and proves the theorem.
\\
\indent (2) Suppose $\tau$ is refinable and $R$ is a $\tau$-$\alpha$-$\beta$-UFR.  Then $R$ is a $\tau$-$\alpha'$-$\beta$-UFR by Corollary \ref{cor: able to not}.  We then apply Theorem \ref{thm: usual diagram} and the hypothesis that $\tau$ is refinable to conclude that $R$ is also a $\tau$-$\beta$-FFR.
\\
\indent (3) Let $R$ be a $\tau$-$\alpha$-$\beta$-UFR.  Then $R$ is $\tau$-$\alpha$ and again, $R$ is a $\tau$-$\alpha'$-$\beta$-UFR by Corollary \ref{cor: able to not}.  Let $a\in R$ be a non-unit.  Let $a=\lambda a_1 \cdots a_n$ be the unique $\tau$-$\alpha'$-factorization up to $\beta$.  Then $a_1, \ldots, a_n$ are the only $\tau$-$\alpha'$ divisors of $a$ up to $\beta$, which proves $R$ is a $\tau$-$\alpha$ $\tau$-$\alpha'$-$\beta$-divisor finite ring.
\\
\indent (4) Let $\tau$ be refinable and $R$ be a $\tau$-$\alpha$-$\beta$-HFR.  Then by Corollary \ref{cor: able to not}, $R$ is a $\tau$-$\alpha'$-$\beta$-HFR.  We then apply the hypothesis that $\tau$ is refinable and Theorem \ref{thm: usual diagram} to see that $R$ is a $\tau$-BFR.
\\
\indent (5) This is immediate from the definitions.
\\
\indent (6) Suppose that $\tau$ is refinable and $R$ is a $\tau$-$\beta$-WFFR.  Then from Theorem \ref{thm: usual diagram}, we know that $R$ is a $\tau$-$\alpha'$-divisor finite ring.  Furthermore, from \cite{Mooney}, we know that a $\tau$-$\beta$-WFFR with a refinable $\tau$ is $\tau$-$\alpha'$.  Because $\tau$ is refinable, if $R$ is $\tau$-$\alpha'$, then $R$ is $\tau$-$\alpha$.
\end{proof}

\begin{theorem} \label{thm: complete bfr} Let $R$ be a commutative ring with $1$ and $\tau$ be a symmetric relation on $R^{\#}$.  We have the following.
\\
(1) If $R$ is a BFR, then $R$ is a $\tau$-BFR.
\\
(2) If $R$ is a $\tau$-BFR, then $R$ is a $\tau$-complete-BFR.
\\
(3) Let $R$ be $\tau$-complete and $\tau$ refinable.  Then $R$ is a $\tau$-complete-BFR implies $R$ is a $\tau$-BFR.
\\
(4) Let $R$ be $\tau$-completable.  Then $R$ is a $\tau$-complete-BFR implies $R$ is a $\tau$-BFR.
\end{theorem}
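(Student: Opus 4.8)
The plan is to treat the four parts as a chain of inclusions between families of factorizations, the governing principle being that a $\tau$-refinement can never decrease the length of a factorization. Parts (1) and (2) are pure containments, while parts (3) and (4) use $\tau$-completability to push an arbitrary $\tau$-factorization up to a $\tau$-complete one without losing length.

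For part (1) I would note that any $\tau$-factorization $a=\lambda a_1\cdots a_n$ is, forgetting the relation $a_i\,\tau\,a_j$, simply a factorization of $a$ into the $n$ non-units $a_1,\dots,a_n$ (together with the unit $\lambda$). Hence the BFR bound $N(a)$ on the number of non-unit factors of $a$ already bounds $n$ for every $\tau$-factorization, so $R$ is a $\tau$-BFR. Part (2) is the same observation one level finer: every $\tau$-complete factorization is in particular a $\tau$-factorization, so the $\tau$-BFR bound $N(a)$ automatically bounds the length of every $\tau$-complete factorization, giving a $\tau$-complete-BFR.

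For part (4), I would start from an arbitrary $\tau$-factorization $a=\lambda a_1\cdots a_n$. Since $R$ is $\tau$-completable, this factorization $\tau$-refines to a $\tau$-complete factorization $a=\mu c_1\cdots c_m$. Writing the refinement via $a_i=\lambda_i b_{i1}\cdots b_{im_i}$ with each block a $\tau$-factorization, we have $m_i\geq 1$ and therefore $m=\sum_i m_i\geq n$. Taking $N(a)$ to be the $\tau$-complete-BFR bound, we get $n\leq m\leq N(a)$, so $N(a)$ bounds the length of every $\tau$-factorization of $a$ and $R$ is a $\tau$-BFR. Part (3) then reduces to part (4): by Theorem \ref{thm:big theorem}(18), taking $\alpha'=$ complete and $\alpha=$ completable, a refinable $\tau$ together with $R$ being $\tau$-complete already forces $R$ to be $\tau$-completable, whence the argument of part (4) applies verbatim.

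All four steps are elementary; the single point deserving care is the monotonicity of length under $\tau$-refinement, namely that each block $b_{i1}\cdots b_{im_i}$ contributes at least one factor so that refining cannot shorten a factorization. This is precisely the lever that converts a bound on the comparatively sparse family of $\tau$-complete factorizations into a bound on all $\tau$-factorizations, and I expect it to be the (modest) crux of the proof.
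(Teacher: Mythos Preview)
Your proof is correct and matches the paper's approach essentially point for point: parts (1) and (2) are the same containment arguments, and part (4) is the same length-monotonicity argument under $\tau$-refinement. The only cosmetic difference is in part (3): the paper directly refines each $a_i$ to a $\tau$-complete factorization and invokes Theorem~\ref{lem: complete refinement} to see the result is $\tau$-complete, whereas you observe that Theorem~\ref{thm:big theorem}(18) already packages this as ``$\tau$-complete $+$ refinable $\Rightarrow$ $\tau$-completable'' and then reuse (4); these are the same argument wearing different clothes.
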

\begin{proof}
(1) Let $R$ be a BFR, and $a$ be a non-unit.  Suppose $N(a)$ is the bound on the length of any factorization of $a$.  Any $\tau$-factorization $a=\lambda a_1 \cdots a_n$ is certainly a factorization, so $n\leq N(a)$, proving $R$ is a $\tau$-BFR.
\\
\indent (2) Let $R$ be a $\tau$-BFR, and $a$ be a non-unit.  Suppose $N(a)$ is the bound on the length of any $\tau$-factorization of $a$.  Any $\tau$-complete-factorization $a=\lambda a_1 \cdots a_n$ is certainly a $\tau$-factorization, so $n\leq N(a)$, proving $R$ is a $\tau$-complete-BFR.
\\
\indent (3) Let $R$ be $\tau$-complete and $\tau$ refinable.  Suppose $R$ is a $\tau$-complete-BFR.  Let $a$ be a non-unit.  Let $N(a)$ be the bound on the length of any $\tau$-complete factorization.  We claim this also serves as a bound on the length of any $\tau$-factorization. Let $a=\lambda a_1 \cdots a_n$ be any $\tau$-factorization of $a$.  Because $R$ is $\tau$-complete, each $a_i$ has a $\tau$-complete factorization and $\tau$ is refinable, we can $\tau$-refine this factorization into a $\tau$-complete factorization, say $a=\lambda' b_1 \cdots b_m$.  We have $n \leq m \leq N(a)$ as desired.
\\
\indent (4) Let $R$ be $\tau$-completable.  Suppose $R$ is a $\tau$-complete-BFR.  Let $a$ be a non-unit.  Let $N(a)$ be the bound on the length of any $\tau$-complete factorization.  This also serves as a bound on the length of any $\tau$-factorization. Let $a=\lambda a_1 \cdots a_n$ be any $\tau$-factorization of $a$.  By hypothesis, we can $\tau$-refine this factorization into a $\tau$-complete factorization, say $a=\lambda' b_1 \cdots b_m$.  We have $n \leq m \leq N(a)$ as desired.
\end{proof}
\begin{theorem} \label{thm: complete ffr}Let $R$ be a commutative ring with $1$ and $\tau$ be a symmetric relation on $R^{\#}$.  Let $\beta \in \{ $associate, strongly associate, very strongly associate $\}$.  We have the following.
\\
(1) If $R$ is a $\beta$-FFR, then $R$ is a $\tau$-$\beta$-FFR.
\\
(2) If $R$ is a $\tau$-$\beta$-FFR, then $R$ is a $\tau$-complete-$\beta$-FFR.
\\
(3) Let $R$ be $\tau$-complete and $\tau$ refinable.  Then $R$ is a $\tau$-complete-FFR implies $R$ is a $\tau$-$\beta$-FFR.
\\
(4) Let $R$ be $\tau$-completable.  Then $R$ is a $\tau$-complete-FFR implies $R$ is a $\tau$-$\beta$-FFR.
\end{theorem}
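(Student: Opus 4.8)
The plan is to follow the architecture of the proof of Theorem~\ref{thm: complete bfr}, replacing the length comparisons used there for bounded factorization by counting arguments appropriate to finite factorization. Implications (1) and (2) are pure containment observations, whereas the converses (3) and (4) require passing back from finiteness of the $\tau$-complete factorizations to finiteness of \emph{all} $\tau$-factorizations, and this is where the substance lies.

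For (1) I would observe that a $\tau$-factorization is simply an ordinary factorization satisfying the extra constraint $a_i\,\tau\,a_j$ for $i\neq j$; hence, for a fixed non-unit $a$, the $\tau$-factorizations of $a$ form a subcollection of the factorizations of $a$. If $R$ is a $\beta$-FFR there are only finitely many of the latter up to rearrangement and $\beta$, so there are only finitely many of the former, and $R$ is a $\tau$-$\beta$-FFR. Part (2) is the identical observation one level down: every $\tau$-complete factorization is in particular a $\tau$-factorization, so the $\tau$-complete factorizations of $a$ form a subcollection of the $\tau$-factorizations of $a$, and finiteness of the larger family forces finiteness of the smaller.

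For (3) and (4), fix a non-unit $a$ and let $C_1,\dots,C_k$ represent the finitely many $\tau$-complete factorizations of $a$ up to rearrangement and $\beta$, with $C_j$ of length $m_j$. Given an arbitrary $\tau$-factorization $a=\lambda a_1\cdots a_n$, I would first produce a $\tau$-complete refinement of it. Under the hypotheses of (3) each $a_i$ has a $\tau$-complete factorization since $R$ is $\tau$-complete, and Theorem~\ref{lem: complete refinement} guarantees that concatenating these along the refinable $\tau$ produces a $\tau$-complete factorization of $a$; under the hypothesis of (4) such a refinement is supplied directly by $\tau$-completability. In either case a $\tau$-refinement refines each $a_i$ separately, so the factors of the resulting $\tau$-complete factorization split into $n$ blocks whose block-products recover $a_1,\dots,a_n$ up to unit multiples. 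Thus every $\tau$-factorization of $a$ is obtained, up to rearrangement and $\beta$, by choosing one of the $C_j$ and partitioning its $m_j$ factors into blocks, multiplying within each block. Since a set of $m_j$ elements admits only finitely many partitions, the number of $\tau$-factorizations of $a$ up to rearrangement and $\beta$ is bounded by the finite sum over $j$ of the number of partitions of $C_j$, and therefore $R$ is a $\tau$-$\beta$-FFR.

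The main obstacle is exactly this grouping step: I must ensure that every $\tau$-factorization genuinely arises, up to rearrangement and $\beta$, as a partition of one of the fixed representatives $C_j$, so that the partition count is a legitimate upper bound. The two ingredients are that a $\tau$-refinement refines each factor individually (so the block structure is present) and that replacing the factors of the true refinement by those of the chosen representative $C_j$ and then forming block-products preserves $\beta$; for $\beta$ equal to associate or strong associate this is immediate, since principal ideals and unit multiples behave multiplicatively, while the very strongly associate case demands the same extra care that appears elsewhere in the paper. This is precisely the grouping bound already used in Theorem~\ref{thm: usual diagram}(2), with Theorem~\ref{lem: complete refinement} playing the role that unique factorization played there; once it is in place the finiteness is immediate.
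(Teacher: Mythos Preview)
Your proposal is correct and follows essentially the same route as the paper: parts (1) and (2) are the identical containment observations, and for (3) and (4) the paper also refines an arbitrary $\tau$-factorization to a $\tau$-complete one (via Theorem~\ref{lem: complete refinement} in (3), via $\tau$-completability in (4)) and then recovers the original by grouping blocks, concluding finiteness from the finitely many complete factorizations and the finitely many ways to partition each. You are, if anything, more careful than the paper in flagging the $\beta=$ very strong associate case of the grouping step, which the paper passes over silently.
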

\begin{proof}
(1) Let $a$ be a non-unit.  The set of $\tau$-factorizations of $a$ up to $\beta$ is among the set of factorizations of $a$.  By hypothesis the latter is finite, so certainly the former is.
\\
\indent (2) Let $a$ be a non-unit.  The set of $\tau$-complete-factorizations of $a$ up to $\beta$ is among the set of $\tau$-factorizations of $a$.  By hypothesis, the latter is finite, so certainly the former is.
\\
\indent (3) Let $a$ be a non-unit.  We claim every $\tau$-factorization of $a$, up to $\beta$ can be realized as coming from grouping of factors of a $\tau$-complete-factorization up to $\beta$.  Since there are a finite number of $\tau$-factorizations up to $\beta$, each with a finite number of factors, there is a finite number of ways of grouping the factors to generate different $\tau$-factorizations up to $\beta$.  Given a $\tau$-factorization $a=\lambda a_1 \cdots a_n$, there is a $\tau$-complete factorization for each $a_i$ with $1 \leq i \leq n$.  Suppose $a_i = \lambda_i b_{i1} \cdots {im_i}$ is the $\tau$-complete factorization of $a_i$ for $1 \leq i \leq n$.  By hypothesis, $\tau$ is refinable, so 
\begin{equation} \label{eq:complete} a=(\lambda \lambda_1 \cdots \lambda_n)b_{11}\cdots b_{1m_1}\cdot b_{21}\cdots b_{2m_2} \cdots b_{n1} \cdots b_{nm_n}
\end{equation}
is a $\tau$-factorization.  By Theorem \ref{lem: complete refinement}, this is a $\tau$-complete factorization and hence was among the finite number of $\tau$-complete factorizations of $a$ up to $\beta$.
\\
\indent (4) This proof is nearly identical to the proof of (3).  The only modification is that we can use the fact that since $R$ is $\tau$-completable to automatically conclude that any factorization $a=\lambda a_1 \cdots a_n$ can be $\tau$-refined into a $\tau$-complete factorization of the form of Equation \ref{eq:complete}.  
\end{proof}
\begin{theorem} \label{thm: cdf-ring}Let $R$ be a commutative ring with $1$ and $\tau$ be a symmetric relation on $R^{\#}$.  Let $\beta \in \{ $associate, strongly associate, very strongly associate $\}$.  We have the following.
\\
(1) If $R$ is a $\beta$-WFFR, then $R$ is a $\tau$-$\beta$-WFFR.
\\
(2) If $R$ is a $\tau$-$\beta$-WFFR, then $R$ is a $\tau$-complete-$\beta$-divisor finite ring.
\\
(3) If $R$ is a $\tau$-$\beta$-atomic divisor finite ring, then $R$ is a $\tau$-complete-$\beta$-divisor finite ring.
\\
(4) If $R$ is a $\tau$-$\beta$-strongly atomic divisor finite ring, then $R$ is a $\tau$-complete-$\beta$-divisor finite ring.
\\
(5) If $\tau$ is refinable and $R$ is a $\tau$-$\beta$-m-atomic divisor finite ring, then $R$ is a $\tau$-complete-$\beta$-divisor finite ring.
\end{theorem}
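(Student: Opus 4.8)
The plan is to read all five parts as containments of divisor sets: each ``divisor-finite'' condition only asks that a certain set of divisors of a fixed non-unit $a$, taken up to $\beta$, be finite. For a non-unit $a$ write $D(a)$, $D_\tau(a)$, and $D_c(a)$ for the sets (up to $\beta$) of non-trivial divisors, non-trivial $\tau$-divisors, and divisors occurring in some $\tau$-complete factorization of $a$, and write $D_{\mathrm{at}}(a)$, $D_{\mathrm{sa}}(a)$, $D_{\mathrm{ma}}(a)$ for the $\tau$-atomic, $\tau$-strongly atomic, and $\tau$-m-atomic $\tau$-divisors of $a$.

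Parts (1) and (2) are immediate inclusions. Any $\tau$-factorization is in particular a factorization, so every $\tau$-divisor of $a$ is a divisor of $a$; hence $D_\tau(a)\subseteq D(a)$, and a $\beta$-WFFR is a $\tau$-$\beta$-WFFR. Similarly, a divisor occurring in a $\tau$-complete factorization $a=\lambda a_1\cdots a_n$ is one of the $a_i$ and hence a $\tau$-divisor of $a$, so $D_c(a)\subseteq D_\tau(a)\cup\{a\}$; thus a $\tau$-$\beta$-WFFR is a $\tau$-$\beta$-cdf ring. Each of these is a one-line set inclusion together with the observation that a finite set has finite subsets.

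For (3)--(5) the intended mechanism is the same: show each element of $D_c(a)$ is a $\tau$-divisor of $a$ of the corresponding irreducibility type, so that $D_c(a)$ embeds into $D_{\mathrm{at}}(a)$ (resp. $D_{\mathrm{sa}}(a)$, $D_{\mathrm{ma}}(a)$) and finiteness passes down. The engine is Theorem \ref{thm: complete-atoms}: in a $\tau$-complete factorization $a=\lambda a_1\cdots a_n$ each factor $a_i$ is a $\tau$-divisor of $a$ and is, along the appropriate arrow of that theorem, $\tau$-atomic, $\tau$-strongly atomic, or $\tau$-m-atomic. Routing each case through the matching arrow then produces the containment and the conclusion.

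The main obstacle is the refinability bookkeeping. Every implication leaving ``$\tau$-complete'' in Theorem \ref{thm: complete-atoms} requires $\tau$ to be refinable, which is exactly why the m-atomic statement (5) carries that hypothesis. The delicate cases are therefore (3) and (4), where one must decide whether $D_c(a)\subseteq D_{\mathrm{at}}(a)$ and $D_c(a)\subseteq D_{\mathrm{sa}}(a)$ can be had without refinability. The earlier example in which $36=4\cdot 9$ is $\tau$-complete while its factor $4$ is neither $\tau$-atomic nor $\tau$-strongly atomic shows the crude containment can fail for non-refinable $\tau$; so for (3) and (4) I would either carry the refinability hypothesis as in (5) or replace the inclusion by a finer argument that controls the non-atomic complete factors directly from the divisor-finite hypothesis. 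Settling this point is where the real work lies.
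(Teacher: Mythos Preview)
Your approach is exactly the paper's: parts (1) and (2) are the same one-line set inclusions, and for (3)--(5) the paper likewise assumes an infinite family $\{a_i\}$ of non-$\beta$ $\tau$-complete divisors of $a$, places each $a_i$ in a $\tau$-complete factorization $a=\lambda_i a_i b_{i1}\cdots b_{in_i}$, and then invokes Theorem~\ref{thm: complete-atoms} to say that each $a_i$ is $\tau$-atomic (resp.\ $\tau$-strongly atomic, $\tau$-m-atomic), producing infinitely many non-$\beta$ divisors of the forbidden type.

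Your caution about (3) and (4) is well placed and in fact sharper than the paper. The paper's proof of (3) and (4) simply cites Theorem~\ref{thm: complete-atoms} to conclude that a $\tau$-complete factorization is $\tau$-atomic (resp.\ $\tau$-strongly atomic), but \emph{every} outgoing arrow from ``$\tau$-complete'' in that theorem carries the refinability hypothesis $\nabla$. The paper does not supply an alternative argument avoiding refinability, and --- as you observed --- its own Example with $R=\Z$, $\tau=\{(\pm 2,\pm 2),(\pm 3,\pm 3),(\pm 4,\pm 9),(\pm 9,\pm 4)\}$, and $36=4\cdot 9$ shows a $\tau$-complete factor ($4$ or $9$) that is not even $\tau$-atomic. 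So the containment $D_c(a)\subseteq D_{\mathrm{at}}(a)$ genuinely fails without refinability, and the paper's proof of (3) and (4) has the very gap you flagged: either the hypotheses of (3) and (4) should include ``$\tau$ refinable'' (matching (5)), or a different argument is needed. You have not missed anything --- you have caught something the paper glosses over.
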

\begin{proof} (1) Let $a\in R$ be a non-unit. If $a$ has a finite number of divisors up to $\beta$, then it certainly has a finite number  $\tau$-divisors up to $\beta$.
\\
\indent (2) Let $a\in R$ be a non-unit.  If there are a finite number of $\tau$-divisors of $a$ up to $\beta$, then certainly there are a finite number of $\tau$-divisors which occur as a $\tau$-factor in some $\tau$-complete-factorization of $a$ up to $\beta$.
\\
\indent (3) (resp. (4)) Let $a\in R$ be a non-unit.  Suppose $\{a_i\}_{i=1}^{\infty}$ is a infinite collection of non-$\beta$ $\tau$-divisors which occur in some $\tau$-complete factorization of $a$.  Say $a=\lambda_i a_i b_{i1} \cdots b_{in_i}$ is one such $\tau$-complete factorization.  By Theorem \ref{thm: complete-atoms}, this $\tau$-complete factorization is $\tau$-atomic (resp. strongly atomic).  This provides an infinite number of non-$\beta$ $\tau$-atomic (resp. $\tau$-strongly atomic) divisors of $a$, a contradiction.
\\
\indent (5) Let $a\in R$ be a non-unit.  Suppose $\{a_i\}_{i=1}^{\infty}$ is a infinite collection of non-$\beta$ $\tau$-divisors which occur in some $\tau$-complete factorization of $a$.  Say $a=\lambda_i a_i b_{i1} \cdots b_{in_i}$ is one such $\tau$-complete factorization.  We have a $\tau$ which is refinable, so by Theorem \ref{thm: complete-atoms}, this $\tau$-complete factorization is $\tau$-m-atomic.  This provides an infinite number of non-$\beta$ $\tau$-m-atomic divisors of $a$, a contradiction.
\end{proof}
We notice at this point that many of the $\tau$-finite factorization and $\tau$-complete finite factorization properties result in $R$ having the property that for a given non-unit $a\in R$, there is a finite number of divisors of $a$ which occur as a $\tau$-factor of some $\tau$-complete factorization.  We summarize these in the form of the following corollary.
\begin{corollary} Let $R$ be a commutative ring with $1$ and $\tau$ be a symmetric relation on $R^{\#}$.  Let $\beta \in \{ $associate, strongly associate, very strongly associate $\}$.  If $R$ satisfies any of the following conditions, then $R$ is a $\tau$-complete-$\beta$-divisor finite ring.
\\
(1) $R$ is a $\beta$-FFR.
\\
(2) $R$ is a $\tau$-$\beta$-FFR.
\\
(3) $R$ is a $\tau$-complete-$\beta$-FFR.
\\
(4) $R$ is a $\tau$-complete (completable)-$\beta$-UFR. 
\\
(5) $R$ is a $\beta$-WFFR.
\\
(6) $R$ is a $\tau$-$\beta$-WFFR.
\\
(7) $R$ is a $\tau$-$\beta$-irreducible (resp. strongly irreducible) divisor finite ring.
\\
(8) $R$ is a strongly associate ring and $R$ is a $\tau$-$\beta$-m-irreducible divisor finite ring.
\end{corollary}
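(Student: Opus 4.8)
The plan is to verify each of the eight hypotheses by reducing it, through the chain of implications already assembled in this section, to one of the three situations that have been shown to force the $\tau$-complete-$\beta$-divisor finite (cdf) property: being a $\tau$-complete-$\beta$-FFR (Theorem \ref{thm: main diagram}(6)), being a $\tau$-$\beta$-WFFR (Theorem \ref{thm: cdf-ring}(2)), or being a $\tau$-$\beta$-atomic or $\tau$-$\beta$-strongly atomic divisor finite ring (Theorem \ref{thm: cdf-ring}(3),(4)). Thus essentially no new argument is needed for seven of the eight parts; the content is purely in routing each hypothesis to the correct earlier result, and since the reductions are immediate the proof can be stated tersely.

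On the finite-factorization side, I would first observe that conditions (1), (2), and (3) all funnel into the $\tau$-complete-$\beta$-FFR property. Indeed, Theorem \ref{thm: complete ffr}(1) promotes a $\beta$-FFR to a $\tau$-$\beta$-FFR, and Theorem \ref{thm: complete ffr}(2) promotes a $\tau$-$\beta$-FFR to a $\tau$-complete-$\beta$-FFR; so (1) $\Rightarrow$ (2) $\Rightarrow$ (3) $\Rightarrow$ ($\tau$-complete-$\beta$-FFR), whereupon one application of Theorem \ref{thm: main diagram}(6) delivers the cdf conclusion. Condition (4) is handled directly by Theorem \ref{thm: main diagram}(3), which sends a $\tau$-complete (resp. completable)-$\beta$-UFR to a $\tau$-complete (resp. completable) $\tau$-$\beta$-cdf ring, and such a ring is in particular $\tau$-$\beta$-cdf.

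On the weak-finite and divisor-finite side, condition (5) passes through Theorem \ref{thm: cdf-ring}(1) (a $\beta$-WFFR is a $\tau$-$\beta$-WFFR) and then Theorem \ref{thm: cdf-ring}(2); condition (6) is exactly the hypothesis of Theorem \ref{thm: cdf-ring}(2); and condition (7), where $\tau$-$\beta$-irreducible means $\tau$-$\beta$-atomic and $\tau$-$\beta$-strongly irreducible means $\tau$-$\beta$-strongly atomic, is precisely Theorem \ref{thm: cdf-ring}(3) and (4), respectively. In each of these the underlying fact being invoked is that a factor occurring in a $\tau$-complete factorization is $\tau$-atomic (resp. strongly atomic), as recorded in Theorem \ref{thm: complete-atoms}.

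The one part that genuinely requires care is (8), and I expect it to be the main obstacle. Here we are given only that $R$ is strongly associate and $\tau$-$\beta$-m-atomic divisor finite, with \emph{no} refinability hypothesis, so the m-atomic analogue Theorem \ref{thm: cdf-ring}(5) is not directly available, as it assumes $\tau$ refinable. The natural plan is to re-run the contradiction argument of that part: assuming infinitely many pairwise non-$\beta$ $\tau$-divisors of a fixed non-unit $a$ each occur in some $\tau$-complete factorization $a=\lambda_i a_i b_{i1}\cdots b_{in_i}$, one would like to conclude that each $a_i$ is $\tau$-m-atomic, contradicting the divisor finiteness hypothesis. The delicate step is exactly the passage ``$\tau$-complete factor $\Rightarrow$ $\tau$-m-atomic,'' which in Theorem \ref{thm: cdf-ring}(5) rested on refinability and which here must instead be extracted from strong associativity. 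The only leverage strong associativity supplies is the collapse of the associate relations, namely the $\approx$-labeled arrow of Theorem \ref{thm: complete-atoms} and the ring-level statement of Theorem \ref{thm:big theorem}(6); so the crux is to show that this collapse is enough to upgrade the generic complete-factor information to $\tau$-m-atomic information in the absence of refinability. I would isolate this single passage as a standalone lemma and prove it directly, since once it is in hand the remainder of (8) is verbatim the proof of Theorem \ref{thm: cdf-ring}(5).
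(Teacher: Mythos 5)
For parts (1)--(7) your routing is correct and is essentially the paper's own proof: (1) $\Rightarrow$ (2) $\Rightarrow$ (3) by Theorem \ref{thm: complete ffr}(1),(2), then Theorem \ref{thm: main diagram}(6) (the paper miscites part (5) here, and also writes ``(4) $\Rightarrow$ (1)'' where it means (4) $\Rightarrow$ (3), so your citations are actually the cleaner ones); (4) directly by Theorem \ref{thm: main diagram}(3), where the paper takes the equivalent path through the $\tau$-complete-$\beta$-FFR property; (5) and (6) by Theorem \ref{thm: cdf-ring}(1),(2); and (7) as a restatement of Theorem \ref{thm: cdf-ring}(3),(4).

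The genuine gap is part (8), which you correctly flag as problematic but do not close, and the standalone lemma you propose to close it with is false. You want: in a strongly associate ring, without refinability, every factor occurring in a $\tau$-complete factorization is $\tau$-m-atomic. The paper's own example already refutes this: take $R=\Z$ (strongly associate, indeed pr\'esimplifiable) with $\tau=\{(\pm 2,\pm 2),(\pm 3,\pm 3),(\pm 4,\pm 9),(\pm 9,\pm 4)\}$; then $36=4\cdot 9$ is $\tau$-complete, yet $4$ is not $\tau$-m-atomic (nor even $\tau$-atomic), since $4=2\cdot 2$ is a $\tau$-factorization with $4\not\sim 2$. Strong associativity only collapses the associate relations --- its sole role in Theorem \ref{thm: complete-atoms} is the arrow from $\tau$-m-irreducible to $\tau$-strongly irreducible --- and gives no control whatsoever over refinements, so no argument through your lemma can succeed. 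What the paper actually does is dispose of (7) and (8) in one line as ``restatements of Theorem \ref{thm: cdf-ring} parts (3), (4) and (5)''; that is, it reads the hypothesis of (8) as that of Theorem \ref{thm: cdf-ring}(5), whose side condition is that $\tau$ be \emph{refinable}, so the printed ``strongly associate'' in (8) is evidently a slip in the corollary's statement (likely contamination from the $\approx$-labeled arrows in the diagrams). With the hypothesis read as ``$\tau$ refinable,'' part (8) reduces verbatim to Theorem \ref{thm: cdf-ring}(5), exactly as (7) reduces to parts (3) and (4), and no new lemma is needed; as literally printed, (8) is not established anywhere in the paper, and your instinct that strong associativity alone cannot substitute for refinability is sound.
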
 
\begin{proof}We have seen in Theorem \ref{thm: main diagram} part (2) that (4) $\Rightarrow$ (1).  By Theorem \ref{thm: complete ffr}, we have (1) $\Rightarrow$ (2) $\Rightarrow$ (3) and Theorem \ref{thm: main diagram} part (5) proves that (3) implies $R$ is a $\tau$-$\beta$-cdf ring.  
\\
\indent We know from Theorem \ref{thm: cdf-ring} part (1) and (2), that (5) $\Rightarrow$ (6) and that $(6)$ implies $R$ is a $\tau$-$\beta$-cdf ring.  (7) and (8) are restatements of \ref{thm: cdf-ring} parts (3), (4) and (5).  This completes the proof.
\end{proof}

The following diagram serves as an illustration which attempts to combine several of the previous results regarding various $\tau$-complete finite factorization properties.  Let $\approx$ represent a strongly associate ring, $\nabla$ represent $\tau$ is refinable and let $\dagger$ represent $R$ is both $\tau$-complete and $\tau$-is refinable.  Let $\gamma \in \{$ complete, completable, atomic, atomicable, strongly atomic, strongly atomicable, m-atomic, m-atomicable, very strongly atomic, very strongly atomicable $\}$.
\small{$$\xymatrix{
\tau\text{-}\beta \text{ irr. df ring} \ar@{=>}[d]&\beta\text{-WFFR}\ar@{=>}[d]       &         &        \\
\tau \text{-} \beta \text{ s. irr. df ring} \ar@{=>}[dr]\ar@{=>}^{\approx}[d]&\tau\text{-}\beta\text{-WFFR} \ar@{=>}[d]\ar@{=>}[l]\ar@{=>}[ul]\ar@{=>}[dl]& \beta \text{-FFR} \ar@{=>}[ul]\ar@{=>}[r]\ar@{=>}[d] & \text{BFR} \ar@{=>}[d] \\
\tau \text{-} \beta \text{ m-irr. df ring} \ar@{=>}[d]\ar@{=>}^{\approx}[r]&\tau\text{-}\beta\text{-cdf ring} & \tau \text{-}\beta \text{-FFR} \ar@{=>}[r]\ar@{=>}[ul] \ar@{=>}[d] & \tau\text{-BFR} \ar@{=>}[d] \\
\tau\text{-}\beta \text{ v.s. irr. df ring}&\tau\text{-complete-}\beta \text{-UFR}\ar@{=>}[r] \ar@/^1pc/^{\nabla}[d] \ar@{=>}[dr] & \tau \text{-complete-}\beta \text{-FFR} \ar@{=>}[ul]\ar@{=>}[r]\ar@/_1pc/_{\dagger}[u]&\tau \text{-complete-BFR}\ar@/_1pc/_{\dagger}[u] \ar@{=>}[d]\\
&\tau\text{-completable-}\beta \text{-UFR} \ar@{=>}[u] \ar@{=>}[r] \ar@{=>}[ur] \ar@{=>}[dr]& \tau \text{-complete-HFR} \ar@/_1pc/_{\nabla}[d]\ar@{=>}[ur]&  \tau\text{-ACCP} \ar@{=>}[d]\\
&  & \tau \text{-completable-HFR} \ar@{=>}[u] \ar@{=>}[uur]& \tau\text{-}\gamma }$$}\\
\section*{Acknowledgment}
The author would like to acknowledge The University of Iowa and Professor Daniel D. Anderson for their support while much of the research appearing in this article was completed as a Presidential Graduate Research Fellow.

\bibliographystyle{plain}
\bibliography{bibliography}

\end{document}